\documentclass{article}
\usepackage{amsmath}
\usepackage{amssymb}
\usepackage{mathrsfs}
\usepackage{amsthm}
\usepackage{paralist}
\usepackage{graphicx}
\usepackage{epstopdf}
\usepackage{multirow}
\usepackage[colorlinks=true]{hyperref}
\hypersetup{urlcolor=blue, citecolor=red}
\bibliographystyle{plain}

\textheight=8.2 true in
\textwidth=5.5 true in
\topmargin 30pt
\setcounter{page}{1}

\newtheorem{example}{Example}[section]

\numberwithin{equation}{section}

\newtheorem{lemma}{Lemma}[section]
\newtheorem{remark}{Remark}[section]
\newtheorem{theorem}{Theorem}[section]

\newcommand*\keywords[1]{
\begin{center}{\parbox{14.5cm}{
\noindent\textbf{Keywords \quad}#1\par}}\end{center}
}
\newcommand*\MSC[1]{\begin{center}{
\parbox{14.5cm}{\textbf{\noindent MSC(2010)\quad}#1}}\end{center}
}

\title{An interface-unfitted  finite element method for elliptic interface optimal control problem
\thanks{This work is partially supported by  National Natural Science Foundation of China under grant 11771312, 11471231 and 11401404.}
}
\author{
  Chao Chao Yang \thanks{Email: yangchaochao9055@163.com}, \ Tao Wang  \thanks{Email: 602721197@qq.com }, \
  Xiaoping Xie \thanks{Corresponding author. Email: xpxie@scu.edu.cn} \\
  {School of Mathematics, Sichuan University, Chengdu 610064, China}
}

\date{}
\begin{document}
\maketitle

\begin{abstract}
This paper develops and analyses numerical approximation for linear-quadratic optimal control problem governed by elliptic interface equations. We adopt variational discretization concept to discretize optimal control problem, and apply an interface-unfitted  finite element method due to [A. Hansbo and P. Hansbo. An unfitted finite element method, based on Nitsche's method, for elliptic interface problems. Comput. Methods Appl. Mech. Engrg., 191(47-48): 5537-5552, 2002] to discretize corresponding state and adjoint equations, where  piecewise cut basis functions around interface are enriched into standard conforming finite element space. Optimal error estimates in both $L^2$ norm and a mesh-dependent norm are derived for optimal state, co-state and  control  under different regularity  assumptions. Numerical results verify the theoretical results.
\end{abstract}

\keywords{interface equations, interface control, variational discretization concept, cut finite element method}
\MSC{65K15, 65N30}

\section{Introduction}

Many optimization processes in science and engineering lead to optimal control problems governed by partial differential equations (pdes). In particular in some practical problems, such as the multi-physics progress or engineering design with different materials, the corresponding controlled systems are described by elliptic equations with interface, whose coefficients are discontinuous across the interface.

Let's consider the following  linear-quadratic optimal control problem governed by elliptic interface equations:
\begin{equation}\label{objective-Continue}
       \text{min}~ J(y, u):=\frac{1}{2}\int_\Omega (y-y_d)^2~dx+\frac{\alpha}{2}\int_\Gamma u^2~ds 
\end{equation}
for $ (y,u)\in H^1_0(\Omega)\times L^2 (\Gamma) $ subject to the elliptic interface problem 
\begin{equation}\label{stateEqu-Continue}
\left\{
 \begin{array}{rll}
 & -\nabla\cdot(a(x)\nabla y)=f, & \text{ in }\Omega \\
 & y=0, & \text{ on }\partial\Omega \\
 & [y]=0,[a\nabla_n y ]=g+u, & \text{ on }\Gamma \\
\end{array}
\right.
\end{equation}
with the control constraint
\begin{equation}\label{control-constraint}
u_a\leq u \leq u_b, \text{ a.e. on } \Gamma.
\end{equation}
Here $\Omega\subseteq\mathbb{R}^d (d=2,3)$ is  a polygonal or polyhedral domain, consisting of two disjoint subdomains $\Omega_i (1\leq i\leq 2)$, and interface $\Gamma=\partial\Omega_1\cap \partial\Omega_2$; see Figure \ref{interfaceGeometry-figure} for an illustration. $y_d\in L^2(\Omega)$ is the desired state to be achieved by controlling  $u$ through  interface $\Gamma$, and  $\alpha$ is a positive constant.
$a(\cdot)$ is piecewise constant with
$$a|_{\Omega_i}=a_i>0, i=1,2.$$
$[y]:=(y|_{\Omega_1})|_\Gamma-(y|_{\Omega_2})|_\Gamma$  is  the jump  of function $y$ across interface $\Gamma$, $\nabla_n y=n\cdot\nabla y$  is the normal derivative of $y$ with $n$  denoting the  unit  outward normal vector along $\partial\Omega_1\cap \Gamma$, 
\begin{equation}\label{fguaub}
f\in L^2(\Omega), \quad g\in H^{1/2} (\Gamma), \quad \text{ and } u_a, u_b \in H^{1/2} (\Gamma)\  \text{ with $u_a\leq u_b$ a.e. on $\Gamma$}.
\end{equation}
The choice of homogeneous boundary condition on boundary $\partial\Omega$ is made for ease of presentation, since similar results are valid for other boundary conditions.

\begin{figure}[htbp]
\centering
\includegraphics[width=4cm]{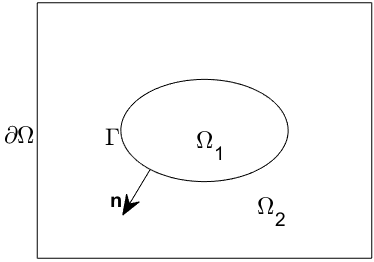}
\caption{The geometry of an interface problem: an illustration} \label{interfaceGeometry-figure}
\end{figure}

For elliptic interface problem, the global regularity of its solution is often low due to the discontinuity of coefficient $a(\cdot)$. The low global regularity may result in reduced accuracy for its finite element approximations \cite{Babuska70fin, Xu82est}, especially when  the interface has complicated geometrical structure \cite{Jeri81, Lius17sec}. Generally there have two categories in literature to tackle this difficulty, i.e.  interface(or body)-fitted methods \cite{Barretts87Fit, Brambles96fin, Chen98, Huangs02mor, Plums03opt, Lis10opt, Xus16opt, Cai17dis, Zhus18spl, Deka18wea} and interface-unfitted methods. For the interface-fitted methods, meshes aligned with the interface are used so as to dominate the approximation error caused by the non-smoothness of solution. In practice, it is usually difficult to construct such meshes, especially in three-dimensional problems.

In contrast, the interface-unfitted methods, with certain types of modifications for approximating functions around interface,  do not require the meshes to fit the interface, and thus avoid complicated mesh generation. For some representative  interface-unfitted methods, we refer to the extended/generalized  finite element method \cite{Melenk95gen, Melenks96par, Moes99fin, Strouboulis00des, Belyschkos09rev}, where additional basis functions characterizing the singularity of solution around interface are enriched into the approximation space, and the immersed finite element method (IFEM) \cite{Li03new, Camps06qua, Li06imm, Fedkiw06imm, Lins07err, Hes12con, Lins15par}, which uses special finite element basis  functions  satisfying the interface jump conditions in a certain sense.

In  \cite{Hans02} an  interface-unfitted finite element method based on Nitsche's approach  \cite{Nitsche71ube} was proposed for elliptic interface equations. In this method,  piecewise linear cut basis functions  around interface are added into the standard linear finite element space,  and  corresponding parameter in the Nitsche's numerical fluxes on each  element intersected by interface are chosen to depend on the relative area/volume of the two parts aside interface. This method was later named as CutFEM in \cite{Hansbo14cut, Burman15cut, Cenanovics16cut, Schott17sta}. In fact, this method can be viewed   as an extended finite element method combined with Nitsche's approach,  which is also called as Nitsche-XFEM \cite{Beckers09nit, Lehrenfelds17opt}.  As shown in  \cite{Hans02}, the CutFEM   yields optimal order convergence, i.e. second order convergence in $L^2$-norm  on a non-degenerate triangulation.

For optimal control problem governed by  elliptic pdes with smooth coefficients $a(\cdot)$ and with  the control $u$ acting in whole domain $\Omega$ or on boundary $\partial\Omega$, a lot of  finite element methods have been studied; see, e.g. \cite{Beckers00ada, Lis02ada, Hinz05var, Benedixs09pos, Lius09new, Hintermuller10goa, Chens10err, Roschs17rel, Kohls14Convergence, Schneiders16pos, Gongs16ada, Wengs16sta, Yangs17rob, Roschs17rel}. However, there are limited literature on the numerical analysis   for  optimal control problems governed by elliptic interface equations.  \cite{Zhang15imm} developed a numerical method, based on the variational discretization concept (cf.\cite{Hinz05var, Hinz09}), for  the case of distributed control, i.e. control $u$ acting in $\Omega$ through
$$-\nabla\cdot(a(x)\nabla y)=f+u,$$
where the IFEM is applied to discretize the state equation with homogeneous interface jump condition
 $$[a\nabla_n y]=0,\quad \text{ on }\Gamma. $$
Optimal error estimates were derived for the control, state and co-state  on uniform triangulations. We note that it is usually difficult to extend the IFEM to the case of non-homogeneous interface conditions \cite{Hes11imm, Hans16a3d, Jis18par}.  \cite{Wachsmuth16opt} investigated $hp$-finite elements for  the model problem \eqref{objective-Continue}-\eqref{control-constraint} on interface-fitted meshes, and didn't give optimal  convergence rates for the state and control in $L^2$ norm.

In this paper,  we'll also adopt the variational discretization concept to discretize the  optimal control problem \eqref{objective-Continue}-\eqref{control-constraint}, and apply the CutFEM   on interface-unfitted meshes for   the state and co-state equations.  Optimal error estimates  in both $L^2$ norm and a mesh-dependent norm will be derived for the optimal state, co-state, and  control   under different regularity  assumptions.

The rest of this paper is organized as follows. In Section 2, we give some notations and optimality conditions for the optimal control problem.  Section 3 sketches  the CutFEM briefly, then complements error estimates of the CutFEM in fractional Sobolev space $H^{3/2}$. In Section 4, we firstly give the discrete optimal control problem and its   optimality conditions, then derives error estimates for the state ,co-state and control of the optimal control problem. Finally, Section 5 provides numerical examples to verify our theoretical results.

\section{Notation and optimality conditions}

For bounded domain $\Lambda \subset  \mathbb{R}^d$ and non-negative integer $m$, let $H^{m}(\Lambda)$ and $H^{m}_0(\Lambda)$  denote the standard    Sobolev spaces on $\Lambda$ with    norm $\|\cdot\|_{m, \Lambda}$ and semi-norm $|\cdot|_{m,\Lambda}$. In particular, $L^2(\Lambda):=H^0(\Lambda)$, with  the standard $L^2$-inner product $(\cdot,\cdot)_\Lambda$. We also need the fractional Sobolev space
$$H^{m+\frac{1}{2}} (\Lambda) := \{w\in H^m (\Lambda): \sum_{|\alpha|=m}\iint_{\Lambda\times\Lambda} \frac{|D^\alpha w(s)-D^\alpha w(t)|^2}{|s-t|^{d+1}}~dsdt <\infty\}$$
with  norm
$$\|w\|_{m+\frac{1}{2},\Lambda} :=\left(\|w\|_{m,\Lambda}^2+\sum_{|\alpha|=m} \iint_{\Lambda\times\Lambda} \frac{|D^\alpha w(s)-D^\alpha w(t)|^2}{|s-t|^{d+1}}~dsdt \right)^{\frac12}.$$
For $s\in \mathbb{R}^+$, let's define
$$ H^s(\Omega_1\cup\Omega_2):=\left\{w\in L^2(\Omega):\ w|_{\Omega_i} \in H^s(\Omega_i),\ i=1,2   \right\}$$
with norm $\|\cdot\|_{s,\Omega_1\cup\Omega_2}:=\left(\sum_{i=1}^2 \|\cdot\|_{s,\Omega_i}^2 \right)^{\frac{1}{2}}$.

The weak formulation of state equation \eqref{stateEqu-Continue} reads: find $y\in H_0^1(\Omega)$ satisfying
\begin{equation}\label{weakFormulation-continue}
a(y,w)=(f,w)_\Omega+(g+u,w)_\Gamma,~~\forall w\in H_0^1(\Omega).
\end{equation}
Where $a(y,w):=( a\nabla y, \nabla w)_\Omega .$

In order to get convergence order of finite element methods, let's make the following  regularity assumptions for above interface equations.
\begin{itemize}
\item[\bf{(R1)}.] If $g+u\in L^2 (\Gamma)$, then the weak solution $y$ of \eqref{weakFormulation-continue}  satisfies $y\in H_0^1 (\Omega)\cap H^{3/2} (\Omega_1 \cup \Omega_2)$ and
\[
\|y\|_{\frac{3}{2},\Omega_1 \cup \Omega_2} \lesssim \|f\|_{L^2 (\Omega)}+\|g+u\|_{L^2 (\Gamma)}.
\]
\item[\bf{(R2)}.] If $g+u\in H^{1/2} (\Gamma)$, then the weak solution $y$ of \eqref{weakFormulation-continue} satisfies $y\in H_0^1 (\Omega)\cap H^2 (\Omega_1 \cup \Omega_2)$ and
\[
\|y\|_{2,\Omega_1 \cup \Omega_2} \lesssim \|f\|_{L^2 (\Omega)}+\|g+u\|_{\frac{1}{2},\Gamma}.
\]
\end{itemize}
Here and in what follows, we use ``$\bar{a}\lesssim \bar{b}$'' to denote that, there is a generic positive constant $C$, independent of the mesh parameter  $h$  and the location of interface relative to the mesh, such that ``$\bar{a}\leq C\bar{b}$.  ``$\bar{a}\approx  \bar{b}$'' means ``$\bar{a}\lesssim \bar{b}\lesssim \bar{a}$''.

\begin{remark} \label{assumption-regularity}

Let's point out that the above assumptions are reasonable. In fact, for the assumption (R1), if   $\Omega$ and   $\Gamma$ are smooth with $\Gamma\cap \partial\Omega=\emptyset$, then (R1) holds \cite[(2.2)]{Brambles96fin}. And it has been shown in  \cite[Corollary 4.12]{Wachsmuth16opt} that (R1) holds if   $\Omega \subset \mathbb{R}^2$ and its subdomains $\Omega_i$ are all polygonal. For the assumption (R2), if the domain $\Omega$ is convex, and the interface $\Gamma$ is $C^2$ continuous with $\Gamma\cap \partial\Omega=\emptyset$, then (R2) also holds \cite[theorem 2.1]{Chen98}.
\end{remark}

For the boxed control constraint
\begin{equation*}
U_{ad}:=\{u\in L^2(\Gamma):u_a\leq u\leq u_b, \text{ a.e. on } \Gamma\},
\end{equation*}
by standard optimality techniques, we can easily derive existence and uniqueness results and optimality conditions for the optimal control problem.
\begin{lemma}
The optimal control problem \eqref{objective-Continue}-\eqref{control-constraint} admits a unique solution  $ (y^*,u^*)\in H^1_0(\Omega)\times  U_{ad}$, and the  equivalent optimality conditions read: find $(y^*,p^*,u^*)\in H_0^1(\Omega)\times H_0^1(\Omega)\times U_{ad}$ such that
\begin{align}
 & a(y^*,w)=(f,w)_\Omega+(g+u^*,w)_\Gamma,~~\forall w\in H_0^1(\Omega),\label{stateEqu2-continue}\\
 & a(w,p^*)=(y^*-y_d,w)_\Omega,~~\forall w\in H_0^1(\Omega),\label{costateEqu-continue}\\
 & (p^*+\alpha u^*,u-u^*)_\Gamma \geq 0,~~\forall u\in U_{ad}.\label{variational-inequality-continue}
\end{align}
\end{lemma}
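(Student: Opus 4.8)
The plan is to establish three facts in turn: existence and uniqueness of the optimal pair $(y^*,u^*)$, equivalence with the stated first-order system, and well-posedness of the adjoint equation that makes the system meaningful. First I would eliminate the state by introducing the control-to-state map $S\colon L^2(\Gamma)\to H^1_0(\Omega)$, $u\mapsto y=y(u)$, defined through the weak formulation \eqref{weakFormulation-continue}; since $a(\cdot,\cdot)$ is bounded and coercive on $H^1_0(\Omega)$ by the Lax--Milgram lemma (using $a_i>0$ and the Poincar\'e inequality) and the trace operator $H^1_0(\Omega)\to L^2(\Gamma)$ is continuous, $S$ is a well-defined bounded affine operator. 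Substituting $y=Su$ into \eqref{objective-Continue} gives a reduced cost $j(u):=J(Su,u)$ which is a quadratic functional on $L^2(\Gamma)$; I would verify it is continuous, strictly convex (the term $\tfrac{\alpha}{2}\|u\|_{0,\Gamma}^2$ with $\alpha>0$ makes it even strongly convex), and coercive on $L^2(\Gamma)$. Since $U_{ad}$ is nonempty (as $u_a\le u_b$), convex, bounded or at least closed in $L^2(\Gamma)$, the direct method of the calculus of variations yields a minimizer, and strong convexity forces it to be unique; this gives the first assertion.

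Next I would derive the optimality conditions. Because $j$ is Fr\'echet differentiable and $U_{ad}$ is convex, the minimizer $u^*$ is characterized by the variational inequality $j'(u^*)(u-u^*)\ge 0$ for all $u\in U_{ad}$. A direct computation of $j'(u^*)$ using the chain rule produces the term $(y^*-y_d, S'(u-u^*))_\Omega + \alpha(u^*,u-u^*)_\Gamma$, where $S'=S_0$ is the linear part of $S$. To convert the awkward term involving $S_0$ into something local on $\Gamma$, I would introduce the adjoint state $p^*\in H^1_0(\Omega)$ as the unique solution of \eqref{costateEqu-continue}, which again exists by Lax--Milgram applied to the (symmetric) form $a(\cdot,\cdot)$ with right-hand side $(y^*-y_d,\cdot)_\Omega\in H^{-1}(\Omega)$. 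Testing the adjoint equation with $w=S_0(u-u^*)\in H^1_0(\Omega)$ and the state sensitivity equation $a(S_0 v, w)=(v,w)_\Gamma$ with $w=p^*$ gives the duality identity $(y^*-y_d, S_0(u-u^*))_\Omega = a(S_0(u-u^*),p^*) = (u-u^*, p^*)_\Gamma$. Substituting back turns the variational inequality into exactly \eqref{variational-inequality-continue}, while \eqref{stateEqu2-continue} is just the definition of $y^*=Su^*$. Conversely, any triple solving \eqref{stateEqu2-continue}--\eqref{variational-inequality-continue} reverses these steps (the convexity of $j$ makes the first-order condition sufficient), so the system is equivalent to optimality.

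The routine parts are the Lax--Milgram verifications and the quadratic-functional bookkeeping; the one step needing a little care is the \emph{adjoint duality identity}, i.e. justifying that $w=S_0(u-u^*)$ is an admissible test function in the adjoint equation and that the sensitivity $S_0 v$ indeed satisfies $a(S_0 v,w)=(v,w)_\Gamma$ for all $w\in H^1_0(\Omega)$ — this follows by subtracting the weak formulations for $y(u^*+tv)$ and $y(u^*)$ and using linearity of $a(\cdot,\cdot)$ in its first argument, but it is the conceptual crux that links the abstract derivative to the concrete co-state. I do not anticipate a genuine obstacle here: the problem is the textbook linear-quadratic elliptic control setting, merely transplanted onto an interface equation, and the interface jump $[a\nabla_n y]=g+u$ enters only through the boundary functional $(g+u,w)_\Gamma$, which is continuous on $H^1_0(\Omega)$ by the trace theorem. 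Hence the lemma follows from the standard theory once these identifications are made.
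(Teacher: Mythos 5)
Your proposal is correct and follows essentially the same route as the paper: the paper also eliminates the state via the control-to-state map, forms the reduced functional $\tilde J(u)=J(y(u),u)$, obtains existence and uniqueness from its strict convexity and continuity on $U_{ad}$, and invokes convexity for the necessity and sufficiency of the first-order system \eqref{stateEqu2-continue}--\eqref{variational-inequality-continue}, citing standard references where you spell out the Lax--Milgram and adjoint-duality details explicitly. Your elaboration of the duality identity $(y^*-y_d,S_0(u-u^*))_\Omega=(u-u^*,p^*)_\Gamma$ is exactly the step the paper leaves to the cited literature.
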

\begin{proof} For the sake of completeness, we give a brief proof. 
For $u\in L^2 (\Gamma)$, the weak problem \eqref{weakFormulation-continue} admits a unique weak solution $y=y(u)$. Let's introduce  a reduced functional $\tilde J(u):=J(y(u),u)$. Then 
 the existence and uniqueness of $u$ follow from that   $\tilde J(\cdot)$ is strictly convex and continuous in $U_{ad}$.  The equations \eqref{stateEqu2-continue}-\eqref{variational-inequality-continue} are  necessary conditions for the  optimal control problem \eqref{objective-Continue}-\eqref{control-constraint}. And,  from the convexity of $J(\cdot)$, they  are also sufficient conditions (cf. \cite{Lion71, Troltzsch10opt}).
\end{proof}

\begin{remark}
We note that   \eqref{costateEqu-continue} is the so-called adjoint equations, and $p^*$ is the co-state,  which is the weak solution of following interface equations
\[
\left\{
\begin{array}{rll}
 & -\nabla\cdot(a(x)\nabla p^*)=y^*-y_d, & \text{ in }\Omega, \\
 & p^*=0,& \text{ on }\partial\Omega, \\
 & [p^*]=0,[a\nabla_n p^*]=0.& \text{ on }\Gamma.
\end{array}
\right.
\]
\end{remark}
\begin{remark}
The variational inequality \eqref{variational-inequality-continue} means that
\begin{equation}\label{control-projection}
u^*=P_{U_{ad}} (-\frac{1}{\alpha} p^*|_\Gamma),
\end{equation}
where $P_{U_{ad}}$ denotes the $L^2-$projection onto $U_{ad}$ \cite{Brezis11fun}.
\end{remark}

\begin{lemma}
Assume that \eqref{fguaub} holds,  and let $ (y^*,u^*)\in H^1_0(\Omega)\times  U_{ad}$ be the solution to the optimal control problem \eqref{objective-Continue}-\eqref{control-constraint}.  Then, under  the assumption (R2),  we have $u^*\in H^{1/2}(\Gamma), y^*\in H^2(\Omega_1 \cup \Omega_2)$, and
\begin{equation}
\|y^*\|_{2,\Omega_1 \cup \Omega_2}\lesssim \|f\|_{L^2(\Omega)}+\|g\|_{\frac{1}{2},\Gamma}+\|u^*\|_{\frac{1}{2},\Gamma}.
\end{equation}
\end{lemma}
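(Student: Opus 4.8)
The plan is to bootstrap regularity along the optimality system \eqref{stateEqu2-continue}--\eqref{variational-inequality-continue}, invoking the a priori estimate (R2) twice and exploiting the pointwise structure \eqref{control-projection} of the optimal control.

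First, since $y^*\in H^1_0(\Omega)$ and $y_d\in L^2(\Omega)$, the source term $y^*-y_d$ of the adjoint equation \eqref{costateEqu-continue} lies in $L^2(\Omega)$, while (by symmetry of $a(\cdot,\cdot)$) $p^*$ is the weak solution of a state-type interface problem with \emph{homogeneous} interface conditions; in particular its flux jump $0$ belongs to $H^{1/2}(\Gamma)$. Hence (R2) applies to $p^*$ and yields $p^*\in H^1_0(\Omega)\cap H^2(\Omega_1\cup\Omega_2)$ with $\|p^*\|_{2,\Omega_1\cup\Omega_2}\lesssim\|y^*-y_d\|_{L^2(\Omega)}<\infty$.

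Next I would take traces on $\Gamma$: the trace theorem on each subdomain $\Omega_i$ together with $[p^*]=0$ gives $p^*|_\Gamma\in H^{3/2}(\Gamma)\hookrightarrow H^{1/2}(\Gamma)$ and $\|p^*|_\Gamma\|_{1/2,\Gamma}\lesssim\|p^*\|_{2,\Omega_1\cup\Omega_2}$. By \eqref{control-projection} we may write, pointwise a.e. on $\Gamma$, $u^*=P_{U_{ad}}(-\tfrac1\alpha p^*|_\Gamma)=\min\{u_b,\max\{u_a,-\tfrac1\alpha p^*|_\Gamma\}\}$. The key point is that the lattice operations preserve $H^{1/2}(\Gamma)$: using $\max\{a,b\}=\tfrac12(a+b+|a-b|)$ and $\min\{a,b\}=\tfrac12(a+b-|a-b|)$, it suffices to check that $w\mapsto|w|$ maps $H^{1/2}(\Gamma)$ into itself with $\||w|\|_{1/2,\Gamma}\le\|w\|_{1/2,\Gamma}$; indeed $\||w|\|_{0,\Gamma}=\|w\|_{0,\Gamma}$, and $\bigl||w(s)|-|w(t)|\bigr|\le|w(s)-w(t)|$ bounds the Gagliardo seminorm in the definition of $\|\cdot\|_{1/2,\Gamma}$ integrand by integrand. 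Since $u_a,u_b\in H^{1/2}(\Gamma)$ by \eqref{fguaub}, this gives $u^*\in H^{1/2}(\Gamma)$ with $\|u^*\|_{1/2,\Gamma}\lesssim\|u_a\|_{1/2,\Gamma}+\|u_b\|_{1/2,\Gamma}+\|p^*|_\Gamma\|_{1/2,\Gamma}$.

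Finally, $g+u^*\in H^{1/2}(\Gamma)$ since $g\in H^{1/2}(\Gamma)$, so (R2) applied to the state equation \eqref{stateEqu2-continue} yields $y^*\in H^1_0(\Omega)\cap H^2(\Omega_1\cup\Omega_2)$ and $\|y^*\|_{2,\Omega_1\cup\Omega_2}\lesssim\|f\|_{L^2(\Omega)}+\|g+u^*\|_{1/2,\Gamma}\le\|f\|_{L^2(\Omega)}+\|g\|_{1/2,\Gamma}+\|u^*\|_{1/2,\Gamma}$, which is the claimed estimate. The only step that is not a direct appeal to (R2) or the trace theorem is the $H^{1/2}(\Gamma)$-stability of the projection $P_{U_{ad}}$: in an $H^1$ setting this is the classical fact that the min/max of $H^1$ functions remain in $H^1$, but here it rests on the $1$-Lipschitz property of $t\mapsto|t|$ at the level of the Gagliardo seminorm — which is precisely why $u_a,u_b$ are taken in $H^{1/2}(\Gamma)$ rather than merely in $L^2(\Gamma)$. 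I expect this lattice-stability step to be the only delicate point; everything else is routine.
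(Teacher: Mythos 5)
Your proof is correct and follows essentially the same route as the paper: both arguments reduce everything to showing $u^*\in H^{1/2}(\Gamma)$ via the projection formula \eqref{control-projection} and the boxed structure of $U_{ad}$ with $u_a,u_b\in H^{1/2}(\Gamma)$, and then invoke (R2) for the state equation to get the stated bound. The only differences are inessential: the paper gets $p^*|_\Gamma\in H^{1/2}(\Gamma)$ directly from $p^*\in H^1(\Omega)$ by the trace theorem (your detour through (R2) for the adjoint and an $H^{3/2}(\Gamma)$ trace is harmless but unnecessary), and it cites Tr\"oltzsch for the $H^{1/2}$-stability of the min/max operations that you verify by hand via the Gagliardo seminorm.
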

\begin{proof}
From (R2), it suffices to  show  $u^*\in H^{1/2}(\Gamma)$. Since  $p^*\in H^1(\Omega)$,  
from \eqref{control-projection} it follows $u^*=P_{U_{ad}} (-\frac{1}{\alpha} p^*|_{\Gamma})\in U_{ad}$. As the control constraint in $U_{ad}$ is a boxed one with $u_a, u_b\in H^{1/2}(\Gamma)$,  we obtain $u^*\in H^{1/2}(\Gamma)$ (cf. \cite{Troltzsch10opt}).
\end{proof}

\section{ CutFEM for   state and co-state equations}

We know that  the optimal  state $y^*$ and co-state $p^*$ of \eqref{stateEqu2-continue}-\eqref{variational-inequality-continue}  can respectively be viewed as solutions to the following two interface problems.

Find $y^*\in H_0^1(\Omega)$ such that
\begin{equation} \label{stateEqu3-continue}
a(y^*,w)=(f,w)_\Omega+(g+u^*,w)_\Gamma, ~~\forall w\in H_0^1(\Omega).
\end{equation}

Find $p^*\in H_0^1(\Omega)$ such that
\begin{equation} \label{costateEqu3-continue}
 a(w,p^*)=(y^*-y_d,w)_\Omega,~~\forall w\in H_0^1(\Omega).
\end{equation}

\subsection{Cut finite element schemes}

Let $\mathscr{T}_h$ be a shape-regular triangulation of $\Omega$ consisting of open triangles/tetrahedrons, and mesh size $h=\max_{K\in \mathscr{T}_h}h_K$, where $h_K$ denotes the diameter of $K\in \mathscr{T}_h$.  We mention that  $\mathscr{T}_h$ is independent of the location of interface, and elements of $\mathscr{T}_h$ fall into the following three classes:
\begin{align*}
&G_h:=\{K\in \mathscr{T}_h:K\cap \Gamma \neq \emptyset\},\\
&\bar{G}_{i,h}:=\{K\in \mathscr{T}_h:K\notin G_h \text{ and } K\subset \Omega_i\},\quad i=1,2.
\end{align*}
For element $K\in G_h$, which is called as interface element,  let's set $K_i:=K\cap \Omega_i (i=1,2),  \Gamma_K:=\Gamma\cap K$, and denote by $\Gamma_{K,h}$   the straight line/plane connecting the intersection between $\Gamma$ and $\partial K$.

For ease of discussion,   we make the following   assumptions on $\mathscr{T}_h$ and   $\Gamma$ (cf. \cite{Hans02, Schott17sta}).
\begin{itemize}
\item[\bf{(A1)}.] For $K\in G_h$ and    an edge/face $F\subset \partial K$,    $\Gamma\cap F$ is simply connected.
\item[\bf{(A2)}.] For $K\in G_h$, there is a piecewise smooth function $\delta$ which maps  $\Gamma_{K,h}$ to $ \Gamma_K$. 
\end{itemize}
\begin{remark}
Assumptions (A1)-(A2) are easy to satisfy. In $\mathbb{R}^2$, (A1) means that the interface $\Gamma$ intersects each edge of interface element $K\in G_h$ at most once.  And (A2) means that the part of interface $\Gamma$ contained in each interface element  $K\in G_h$ is piecewise smooth.
\end{remark}

Now let's introduce finite dimensional spaces, for $i=1,2$,
\begin{align*}
V_i^h:=\{\phi\in H^1(\Omega_i):\phi|_{K_i}\text{ is a linear polynomial},~\forall K\in G_h\cup\bar{G}_{i,h}, \text{ and }\phi|_{\partial\Omega\cap\partial\Omega_i}=0\},
\end{align*}
\begin{align}
V^h:=\left\{\phi\in L^2(\Omega): \phi|_{\Omega_i}\in V_i^h,\ i=1,2 \right\},
\end{align}
and define two functions $\kappa_1,\kappa_2$ on  $\Gamma$ by
$$\kappa_i|_{\Gamma_K}=\frac{|K_i|}{|K|}, \forall K\in G_h~~(i=1,2),$$
where $|K_i|$ and $|K|$ denote the area/volume of $K_i$ and $K$ respectively.  It is evident that
$$\kappa_1+\kappa_2=1.$$
For $\phi\in V^h$, we set $\phi_i:=\phi |_{\Omega_i}$ for $i=1,2$, and
\begin{equation*}
\{\phi\}:=\kappa_1\cdot\phi_1|_{\Gamma}+\kappa_2\cdot\phi_2 |_{\Gamma}.
\end{equation*}
Then the cut finite element schemes for  \eqref{stateEqu3-continue} and \eqref{costateEqu3-continue} are described respectively as follows:

Find $y^h\in V^h$ such that
\begin{equation}
a_h(y^h,w_h)=(f,w_h)_\Omega+(g+u^*,\kappa_2 w_{h,1}+\kappa_1 w_{h,2})_\Gamma,\quad \forall w_h\in V^h. \label{cut-stateEqu}
\end{equation}

Find $p^h \in V^h$ such that
\begin{equation}
a_h(p^h,w_h)=(y^*-y_d,w_h)_\Omega, \quad \forall w_h\in V^h. \label{cut-costateEqu}
\end{equation}
The modified bilinear form $a_h(\cdot,\cdot)$ is given by
\begin{align*}
a_h(y^h,w_h):=& \sum_{i=1}^2  (a\nabla y^h, \nabla w_h)_{\Omega_i} \\
& -([y^h],\{a\nabla_n w_h\})_\Gamma-(\{a\nabla_n y^h\},[w_h])_\Gamma +\lambda ([y^h],[w_h])_\Gamma,
\end{align*}
and the stabilization parameter $\lambda$ is taken as
\begin{equation}\label{stablization-constant}
\lambda|_K= \widetilde{C} h_K^{-1} \max\{a_1,a_2\},
\end{equation}
with the constant $\widetilde{C}>0$ sufficiently large.

Let's introduce a mesh-dependent semi-norm $|||\cdot|||$ in $H^{3/2}(\Omega_1\cup\Omega_2)$
with
\begin{equation*}
|||w|||^2:=|w|_{1,\Omega_1\cup\Omega_2}^2+\|[w]\|_{\frac{1}{2},h,\Gamma}^2+\|\{\nabla_n w\}\|_{-\frac{1}{2},h,\Gamma}^2, \quad \forall w\in H^{3/2}(\Omega_1\cup\Omega_2),
\end{equation*}
where \[
 \|\cdot\|_{\frac{1}{2},h,\Gamma}^2:=\sum_{K\in G_h} h_k^{-1}\|\cdot\|_{0,\Gamma_K}^2,~~
 \|\cdot\|_{-\frac{1}{2},h,\Gamma}^2:=\sum_{K\in G_h} h_K\|\cdot\|_{0,\Gamma_K}^2.
\]
It is easy to see that $|||\cdot|||$ is a norm on $V^h$ and it holds
\begin{equation}\label{poincare-inequality}
||w_h||_{0,\Omega}\lesssim |w_h|_{1,\Omega_1\cup\Omega_2} \leq |||w_h|||, \quad \forall w_h\in V^h.
\end{equation}

Then we have the following  boundedness and coerciveness for the bilinear form $a_h(\cdot,\cdot)$ (cf. \cite[Lemma 5]{Hans02}):
\begin{lemma}
It holds
\begin{equation} \label{boundness-discreteBilinear}
a_h(y,w)\lesssim |||y|||\ |||w|||, \quad \forall y, w\in H^{3/2}(\Omega_1\cup\Omega_2).
\end{equation}
In addition, if $\widetilde{C}$ of \eqref{stablization-constant} is chosen to be sufficiently large, then
\begin{equation}\label{coercivity-discreteBilinear}
a_h(w_h,w_h)\gtrsim |||w_h|||^2, \quad \forall w_h\in V^h.
\end{equation}
\end{lemma}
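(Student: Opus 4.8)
The plan is to establish the two estimates separately, following the structure of the Nitsche-type analysis in \cite{Hans02}, while keeping careful track of the scaling constants so that they remain independent of how $\Gamma$ cuts the mesh. Throughout I would rely on three ingredients: the local trace/inverse inequality on an interface element $K\in G_h$ of the form $\|v\|_{0,\Gamma_K}^2\lesssim h_K^{-1}\|v\|_{0,K}^2 + h_K\|\nabla v\|_{0,K}^2$ for $v\in H^1(K_i)$ (valid uniformly in the cut position under assumptions (A1)--(A2)), the discrete inverse inequality $\|\nabla w_h\|_{0,\Gamma_K}^2\lesssim h_K^{-1}\|\nabla w_h\|_{0,K_i}^2$ for piecewise linears, and the partition-of-unity bound $\kappa_1+\kappa_2=1$ with $0\le\kappa_i\le1$, which makes the weighted average $\{\cdot\}$ an $L^2(\Gamma)$-contraction on each $\Gamma_K$.

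For the boundedness \eqref{boundness-discreteBilinear}: I would expand $a_h(y,w)$ into its four terms and bound each by Cauchy--Schwarz on $\Omega_i$ and on each $\Gamma_K$. The volume term $\sum_i(a\nabla y,\nabla w)_{\Omega_i}$ is controlled directly by $|y|_{1,\Omega_1\cup\Omega_2}|w|_{1,\Omega_1\cup\Omega_2}$. For the consistency term $(\{a\nabla_n y\},[w])_\Gamma$ I would pair $\{a\nabla_n y\}$ against $[w]$ element-by-element, inserting the scaling $h_K^{1/2}\cdot h_K^{-1/2}$, so that $\sum_K\|\{a\nabla_n y\}\|_{0,\Gamma_K}\|[w]\|_{0,\Gamma_K}\le \|\{a\nabla_n y\}\|_{-1/2,h,\Gamma}\|[w]\|_{1/2,h,\Gamma}\lesssim|||y|||\,|||w|||$; the symmetric term is handled the same way, and the stabilization term $\lambda([y],[w])_\Gamma$ is bounded using $\lambda|_K\approx h_K^{-1}$ so that it contributes $\|[y]\|_{1/2,h,\Gamma}\|[w]\|_{1/2,h,\Gamma}$. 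Summing the four contributions and using the elementary inequality $ab+cd+ef\le(a^2+c^2+e^2)^{1/2}(b^2+d^2+f^2)^{1/2}$ gives the claim; note this only needs $y,w\in H^{3/2}(\Omega_1\cup\Omega_2)$ so that the normal traces $\nabla_n y$ lie in $L^2(\Gamma)$.

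For the coercivity \eqref{coercivity-discreteBilinear}: setting $w=w_h\in V^h$, the troublesome terms are the two consistency terms, which I would absorb. Using Young's inequality with a parameter $\epsilon$, $2(\{a\nabla_n w_h\},[w_h])_\Gamma\le \epsilon\sum_K h_K\|\{a\nabla_n w_h\}\|_{0,\Gamma_K}^2 + \epsilon^{-1}\sum_K h_K^{-1}\|[w_h]\|_{0,\Gamma_K}^2$. The second piece is $\epsilon^{-1}\|[w_h]\|_{1/2,h,\Gamma}^2$, which is swallowed by $\tfrac12\lambda([w_h],[w_h])_\Gamma$ once $\widetilde C$ is large. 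For the first piece I apply the discrete inverse inequality on each $\Gamma_K$ and the key geometric fact $\kappa_i^2|K|\lesssim|K_i|$ (which holds because $\kappa_i=|K_i|/|K|\le1$), giving $h_K\|\{a\nabla_n w_h\}\|_{0,\Gamma_K}^2\lesssim C_I\,(a_1^2\|\nabla w_h\|_{0,K_1}^2+a_2^2\|\nabla w_h\|_{0,K_2}^2)\lesssim C_I\max\{a_1,a_2\}\sum_i(a\nabla w_h,\nabla w_h)_{K_i}$; choosing $\epsilon$ small relative to $C_I$ absorbs this into $\tfrac12\sum_i(a\nabla w_h,\nabla w_h)_{\Omega_i}$. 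What survives is $\tfrac12|w_h|_{1,\Omega_1\cup\Omega_2}^2+\tfrac12\|[w_h]\|_{1/2,h,\Gamma}^2$; it remains to recover the $\|\{\nabla_n w_h\}\|_{-1/2,h,\Gamma}^2$ term of $|||\cdot|||^2$, which follows once more from the discrete inverse inequality $\|\{\nabla_n w_h\}\|_{-1/2,h,\Gamma}^2\lesssim|w_h|_{1,\Omega_1\cup\Omega_2}^2$, so that $|||w_h|||^2\lesssim|w_h|_{1,\Omega_1\cup\Omega_2}^2+\|[w_h]\|_{1/2,h,\Gamma}^2\lesssim a_h(w_h,w_h)$.

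The main obstacle is making every constant genuinely independent of the interface location: the naive trace inequality on the small piece $K_i$ degenerates as $|K_i|/|K|\to0$, so one must use the mesh-wide scaling $h_K$ rather than a local length of $K_i$, and crucially exploit that the Nitsche weights $\kappa_i$ are chosen exactly so that $\kappa_i$ cancels the bad factor $|K|/|K_i|$ in the inverse estimate applied to $\nabla w_h|_{K_i}$. This robust choice of $\kappa_i$ (together with assumptions (A1)--(A2) ensuring $\Gamma_K$ is well-behaved) is what allows the absorption argument to go through with $\widetilde C$ depending only on shape-regularity and $\max\{a_1,a_2\}/\min\{a_1,a_2\}$; the remaining manipulations are the routine Young/Cauchy--Schwarz bookkeeping sketched above, and I would not belabor them.
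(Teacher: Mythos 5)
Your argument is correct and is essentially the standard Hansbo--Hansbo proof that the paper itself does not reproduce but simply cites as \cite[Lemma 5]{Hans02}: Cauchy--Schwarz with the mesh-weighted interface norms for boundedness, and for coercivity the absorption via Young's inequality together with the cut-robust estimate $h_K\kappa_i^2\|\nabla_n w_{h,i}\|_{0,\Gamma_K}^2\lesssim\|\nabla w_h\|_{0,K_i}^2$, which you correctly derive from $\kappa_i^2|K|\le|K_i|$ and the constancy of $\nabla w_h$ on $K_i$. One small caution: the unweighted inverse inequality $\|\nabla w_h\|_{0,\Gamma_K}^2\lesssim h_K^{-1}\|\nabla w_h\|_{0,K_i}^2$ listed among your ``ingredients'' is not uniform in the cut position (it degenerates as $|K_i|/|K|\to 0$), but since the estimate you actually use is the $\kappa_i$-weighted one, this does not affect the validity of the proof.
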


\begin{remark}
As shown in \cite[lemma 1]{Hans02},   the schemes  \eqref{cut-stateEqu}-\eqref{cut-costateEqu} are consistent with respect to the weak solutions $y^*,p^* \in H^1_0(\Omega)$ of problems \eqref{stateEqu3-continue}-\eqref{costateEqu3-continue} respectively in the following sense: for $w_h\in V^h$, we have
\begin{equation}\label{cut-consistent}
a_h(y^*-y^h,w_h)=0, \quad a_h(p^*-p^h,w_h)=0.
\end{equation}
\end{remark}

From \cite{Hans02}, the following results of existence, uniqueness, and error estimates hold:
\begin{lemma}
Assume $g,u^*\in H^{1/2} (\Gamma) $, and $y^*,p^* \in H^1_0(\Omega)\cap H^{2}(\Omega_1 \cup \Omega_2)$ be the solutions to continuous problems \eqref{stateEqu3-continue}-\eqref{costateEqu3-continue} respectively. If $\widetilde{C}$ of \eqref{stablization-constant} is chosen to be sufficiently large, then
\noindent
(i) The discrete scheme \eqref{cut-stateEqu} admits a unique solution $y^h\in V^h$ such that
\begin{align}
|||y^*-y^h|||\lesssim h \|y^*\|_{2,\Omega_1\cup\Omega_2},\label{errorMeshnorm-cutH2regularity-state} \\
\|y^*-y^h\|_{0,\Omega}\lesssim h^2\|y^*\|_{2,\Omega_1\cup\Omega_2}. \label{errorL2norm-cutH2regularity-state}
\end{align}
(ii) The discrete scheme \eqref{cut-costateEqu} admits a unique solution  $p^h\in V^h$ such that
\begin{align}
|||p^*-p^h|||\lesssim h \|p^*\|_{2,\Omega_1\cup\Omega_2},\label{errorMeshnorm-cutH2regularity-costate} \\
\|p^*-p^h\|_{0,\Omega}\lesssim h^2\|p^*\|_{2,\Omega_1\cup\Omega_2}. \label{errorL2norm-cutH2regularity-costate}
\end{align}
\end{lemma}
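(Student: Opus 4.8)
The plan is to follow the standard template for symmetric Nitsche-type cut finite element schemes: well-posedness from coercivity, the energy-norm rate from a C\'ea-type quasi-optimality estimate combined with an interface-robust interpolation bound, and the $L^{2}$ rate from an Aubin--Nitsche duality argument. Existence and uniqueness are immediate: \eqref{cut-stateEqu} and \eqref{cut-costateEqu} are square linear systems on the finite-dimensional space $V^{h}$, on which $|||\cdot|||$ is a norm by \eqref{poincare-inequality}; since $\widetilde C$ is taken large enough, coercivity \eqref{coercivity-discreteBilinear} makes the system matrix nonsingular, so $y^{h}$ and $p^{h}$ exist and are unique.

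For the energy-norm estimates I would fix a suitable interpolation operator $I_{h}$ into $V^{h}$ and, writing $e_{h}:=I_{h}y^{*}-y^{h}\in V^{h}$, use coercivity, the Galerkin orthogonality \eqref{cut-consistent}, and boundedness \eqref{boundness-discreteBilinear}:
\begin{align*}
|||e_{h}|||^{2} &\lesssim a_{h}(e_{h},e_{h})=a_{h}(I_{h}y^{*}-y^{*},e_{h})+a_{h}(y^{*}-y^{h},e_{h})\\
&=a_{h}(I_{h}y^{*}-y^{*},e_{h})\lesssim|||I_{h}y^{*}-y^{*}|||\,|||e_{h}|||,
\end{align*}
so $|||e_{h}|||\lesssim|||y^{*}-I_{h}y^{*}|||$ and, by the triangle inequality, $|||y^{*}-y^{h}|||\lesssim|||y^{*}-I_{h}y^{*}|||$. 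It then remains to establish the interpolation bound $|||v-I_{h}v|||\lesssim h\,\|v\|_{2,\Omega_{1}\cup\Omega_{2}}$ with a constant independent of how $\Gamma$ meets $\mathscr{T}_{h}$; combined with regularity (R2) this yields \eqref{errorMeshnorm-cutH2regularity-state}. The co-state bound \eqref{errorMeshnorm-cutH2regularity-costate} is obtained in exactly the same way, the scheme \eqref{cut-costateEqu} being consistent with \eqref{costateEqu3-continue} because $[p^{*}]=[a\nabla_{n}p^{*}]=0$.

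For the $L^{2}$ estimates I would argue by duality. Set $e:=y^{*}-y^{h}$ and let $\phi\in H_{0}^{1}(\Omega)$ solve the adjoint interface problem $a(w,\phi)=(e,w)_{\Omega}$ for all $w\in H_{0}^{1}(\Omega)$, i.e.\ $-\nabla\cdot(a\nabla\phi)=e$ with $[\phi]=[a\nabla_{n}\phi]=0$; since $e\in L^{2}(\Omega)$, assumption (R2) gives $\phi\in H^{2}(\Omega_{1}\cup\Omega_{2})$ with $\|\phi\|_{2,\Omega_{1}\cup\Omega_{2}}\lesssim\|e\|_{0,\Omega}$. Integrating by parts elementwise and using the homogeneous jump conditions for $\phi$---the same computation that proves \eqref{cut-consistent}---one gets $a_{h}(w,\phi)=(e,w)_{\Omega}$ for $w$ in $V^{h}+(H_{0}^{1}(\Omega)\cap H^{2}(\Omega_{1}\cup\Omega_{2}))$, in particular $a_{h}(e,\phi)=\|e\|_{0,\Omega}^{2}$, while $a_{h}(e,I_{h}\phi)=0$ by \eqref{cut-consistent}. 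Hence, with boundedness and the interpolation bound applied to $\phi$,
\begin{align*}
\|e\|_{0,\Omega}^{2} &=a_{h}(e,\phi-I_{h}\phi)\lesssim|||e|||\;|||\phi-I_{h}\phi|||\\
&\lesssim h\,\|y^{*}\|_{2,\Omega_{1}\cup\Omega_{2}}\cdot h\,\|\phi\|_{2,\Omega_{1}\cup\Omega_{2}}\\
&\lesssim h^{2}\,\|y^{*}\|_{2,\Omega_{1}\cup\Omega_{2}}\,\|e\|_{0,\Omega},
\end{align*}
and dividing by $\|e\|_{0,\Omega}$ gives \eqref{errorL2norm-cutH2regularity-state}; \eqref{errorL2norm-cutH2regularity-costate} follows identically with the co-state data.

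The hard part is the interface-robust interpolation estimate $|||v-I_{h}v|||\lesssim h\,\|v\|_{2,\Omega_{1}\cup\Omega_{2}}$: the obstruction sits entirely in the two interface terms of $|||\cdot|||$, since on an element $K\in G_{h}$ the piece $\Gamma$ may cut off an arbitrarily thin sliver of $K$, so that the naive scaled trace and inverse inequalities on $K_{i}$ are not uniform in the cut geometry. The standard fix, which I would adopt, is to build $I_{h}v$ on interface elements from Sobolev (Stein) extensions $E_{i}v\in H^{2}(\Omega)$ of $v|_{\Omega_{i}}$ and to estimate $\|[v-I_{h}v]\|_{\frac12,h,\Gamma}$ and $\|\{\nabla_{n}(v-I_{h}v)\}\|_{-\frac12,h,\Gamma}$ using trace inequalities on the whole (uncut) element $K$ rather than on $K_{i}$---this is precisely the construction and analysis of \cite{Hans02}. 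A secondary, routine, point is the bookkeeping that the ``flipped'' weighting $\kappa_{2}w_{h,1}+\kappa_{1}w_{h,2}$ in \eqref{cut-stateEqu} is indeed consistent with the flux jump $[a\nabla_{n}y^{*}]=g+u^{*}$, and that $a_{h}(\cdot,\phi)$ is well defined on the non-conforming error $e$; both are handled by a careful elementwise integration by parts using the interface conditions.
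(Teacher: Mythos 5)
Your proposal is correct and takes essentially the same route as the source the paper relies on: the paper states this lemma without proof, quoting it from \cite{Hans02}, and the proof there (as well as the paper's own proof of the analogous $H^{3/2}$ result in Section 3.2) follows exactly your template --- coercivity of $a_h$ for well-posedness, Galerkin orthogonality plus an interface-robust interpolation estimate built from Sobolev extensions and trace inequalities on uncut elements for the $|||\cdot|||$ bound, and an Aubin--Nitsche duality argument using the $H^2$ regularity of the dual problem (i.e.\ (R2) with homogeneous interface data) for the $L^2$ bound. No gaps beyond the interpolation estimate you already correctly attribute to \cite{Hans02}.
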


\begin{remark}
We note that the error estimates in above lemma require that $y^*,p^*\in H^2( \Omega_1\cup\Omega_2)$. For $y^*$, this means that $g+u^*\in H^{1/2} (\Gamma)$ (cf. \cite{Hans02} and the assumption (R2)).  In next section, we'll derive estimates under  mild regularity assumptions, say  $y^*,p^*\in H^{3/2}( \Omega_1\cup\Omega_2)$.
\end{remark}

\subsection{ Alternative error estimates of  CutFEM}

For $i= 1,2$, let $E_i: \left\{ w\in H^\frac{3}{2}(\Omega_i):\ w|_{\partial \Omega_i\setminus\Gamma}=0\right\} \longrightarrow H^\frac{3}{2}(\Omega)\cap H^1_0 (\Omega)$  be  the extension operators satisfying that, for $w\in H^\frac{3}{2}(\Omega_1\cup\Omega_2)\cap H^1_0 (\Omega)$ with $w_i:=w|_{\Omega_i}$, we have
 \begin{equation}\label{extension-pisewise}
E_iw_i|_{\Omega_i}=w_i,\quad \|E_i w_i\|_{s,\Omega}\lesssim \|w_i\|_{s,\Omega_i},~0\leq s\leq \frac{3}{2}.
 \end{equation}

Let $I_h: H^1_0 (\Omega)\longrightarrow \{w\in C(\bar\Omega): w|_K \text{ is linear },\forall K\in \mathscr{T}_h, \text{ and } w|_{\partial\Omega}=0\}$ denote the Scott-Zhang interpoation operator \cite{Scot90}. Then for $K\in \mathscr{T}_h$ and $ m=1,2$, we have
\[  \|w-I_h w\|_{j,K}\lesssim h_K^{m-j} \|w\|_{m,S_K},\quad \forall w\in H^m (\Omega)\cap H^1_0 (\Omega) , \ j=0,1\]
where $S_K:=\text{interior}\{\cup \bar{T}: T\in \mathscr{T}_h, \bar{T}\cap \bar{K}\neq \emptyset\}$.
 Thus, by using the real interpolation method (cf. the proof of \cite[Theorem (14.3.3)]{Bren08}), it's easy to get estimation
 \begin{equation}\label{interpolationSZ-error}
\|w-I_h w\|_{j,K}\lesssim h_K^{\frac{3}{2}-j}\|w\|_{\frac{3}{2},S_K}, \forall  w\in H^\frac{3}{2} (\Omega)\cap H^1_0 (\Omega), \ K\in \mathscr{T}_h, \ j=0,1.
\end{equation}

Now we construct an interpolation operator $I_h^*:H^1_0 (\Omega)\cap H^\frac{3}{2}(\Omega_1\cup\Omega_2)\longrightarrow V^h$ with
 \begin{equation}\label{interpolation-operator}
 (I_h^* w)|_{\Omega_i}:=(I_h E_i w_i)|_{\Omega_i}, \quad  i=1,2.
 \end{equation}

\begin{lemma}
For $w\in H^1_0 (\Omega)\cap H^\frac{3}{2}(\Omega_1\cup\Omega_2)$, we have
\begin{equation} \label{errorEst-interpolation}
|||w-I_h^* w||| \lesssim h^{\frac{1}{2}} \|w\|_{\frac{3}{2},\Omega_1\cup\Omega_2}.
\end{equation}
\end{lemma}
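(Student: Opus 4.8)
The plan is to bound each of the three terms in the definition of $|||\cdot|||$ separately for $w - I_h^* w$. Write $e := w - I_h^* w$, so $e_i := e|_{\Omega_i} = (w_i - I_h E_i w_i)|_{\Omega_i}$. For the gradient term $|e|_{1,\Omega_1\cup\Omega_2}$, I would sum the local estimate \eqref{interpolationSZ-error} with $j=1$ over all $K$ and use the stability bound \eqref{extension-pisewise} for $E_i$ together with the finite-overlap property of the patches $S_K$; this gives $|e|_{1,\Omega_i} \lesssim h^{1/2}\|E_i w_i\|_{3/2,\Omega} \lesssim h^{1/2}\|w_i\|_{3/2,\Omega_i}$, hence $|e|_{1,\Omega_1\cup\Omega_2} \lesssim h^{1/2}\|w\|_{3/2,\Omega_1\cup\Omega_2}$.

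Next, for the jump term $\|[e]\|_{1/2,h,\Gamma}^2 = \sum_{K\in G_h} h_K^{-1}\|[e]\|_{0,\Gamma_K}^2$, I would use $[e] = [w] - [I_h^* w] = -([I_h^* w])$ on $\Gamma$ since $[w] = 0$; more usefully, on each interface element $K$, $[e]|_{\Gamma_K} = (e_1 - e_2)|_{\Gamma_K}$, and since $w_1 = w_2$ on $\Gamma_K$ we get $[e]|_{\Gamma_K} = (e_1)|_{\Gamma_K} - (e_2)|_{\Gamma_K}$ where each $e_i = E_i w_i - I_h E_i w_i$ restricted to $\Gamma_K$. So it suffices to bound $h_K^{-1}\|E_i w_i - I_h E_i w_i\|_{0,\Gamma_K}^2$ for each $i$. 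For this I would invoke a trace inequality on the part $\Gamma_K$ of the interface inside $K$: for a function $v$ on $K$, $\|v\|_{0,\Gamma_K}^2 \lesssim h_K^{-1}\|v\|_{0,K}^2 + h_K|v|_{1,K}^2$ (this is the standard scaled trace estimate, valid uniformly in the interface location under assumptions (A1)-(A2)). Applying this to $v = E_i w_i - I_h E_i w_i$ and then using \eqref{interpolationSZ-error} for both $j=0$ and $j=1$ gives $\|e_i\|_{0,\Gamma_K}^2 \lesssim h_K^{-1}(h_K^{3/2}\|E_iw_i\|_{3/2,S_K})^2 + h_K(h_K^{1/2}\|E_iw_i\|_{3/2,S_K})^2 \lesssim h_K^2\|E_iw_i\|_{3/2,S_K}^2$, so $h_K^{-1}\|e_i\|_{0,\Gamma_K}^2 \lesssim h_K\|E_iw_i\|_{3/2,S_K}^2 \lesssim h \|E_iw_i\|_{3/2,S_K}^2$; summing over $K\in G_h$ and using finite overlap and \eqref{extension-pisewise} yields $\|[e]\|_{1/2,h,\Gamma}^2 \lesssim h\|w\|_{3/2,\Omega_1\cup\Omega_2}^2$.

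For the flux term $\|\{\nabla_n e\}\|_{-1/2,h,\Gamma}^2 = \sum_{K\in G_h} h_K\|\{\nabla_n e\}\|_{0,\Gamma_K}^2$, the argument is analogous but starts one derivative higher. Since $\{\nabla_n e\} = \kappa_1 \nabla_n e_1 + \kappa_2 \nabla_n e_2$ with $0\le\kappa_i\le 1$, it suffices to bound $h_K\|\nabla e_i\|_{0,\Gamma_K}^2$. Apply the scaled trace inequality to $v = \nabla(E_i w_i - I_h E_i w_i)$ (a vector field): $\|\nabla e_i\|_{0,\Gamma_K}^2 \lesssim h_K^{-1}\|\nabla e_i\|_{0,K}^2 + h_K|\nabla e_i|_{1,K}^2 = h_K^{-1}|e_i|_{1,K}^2 + h_K|e_i|_{2,K}^2$. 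Here the second term is problematic because $I_h E_i w_i$ is only piecewise linear so its second derivatives vanish elementwise, giving $|e_i|_{2,K} = |E_iw_i|_{2,K}$, which is not controlled by the $H^{3/2}$ norm. The clean way around this is to not split off the top derivative but instead bound $\|\nabla e_i\|_{0,\Gamma_K}$ directly by interpolating between the estimates $\|\nabla e_i\|_{0,\Gamma_K} \lesssim h_K^{-1/2}\|\nabla e_i\|_{0,K} + \|\nabla e_i\|_{0,K}^{1/2}|\nabla e_i|_{1,K}^{1/2}$; more precisely, I would use the fractional trace inequality $\|v\|_{0,\Gamma_K}^2 \lesssim h_K^{-1}\|v\|_{0,K}^2 + \|v\|_{0,K}\|v\|_{1,K}$ together with a fractional-order interpolation estimate for $\|\nabla e_i\|_{0,K}$ and $|\nabla e_i|_{1/2,K}$-type quantities obtained from \eqref{interpolationSZ-error} by the real interpolation (K-)method applied on the patch scale. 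The main obstacle is exactly this: getting a uniform-in-interface-location bound on the normal derivative of the interpolation error on $\Gamma_K$ using only $H^{3/2}$ regularity, since the naive trace-then-interpolate route requires $H^2$. The resolution is to phrase everything through real interpolation between the $j=0$ and $j=1$ local estimates before taking traces — i.e., to use a trace inequality of the form $h_K\|\nabla v\|_{0,\Gamma_K}^2 \lesssim \|\nabla v\|_{0,K}^2 + h_K^{2s}|\nabla v|_{s,K}^2$ for $s = 1/2$, applied with $v = E_i w_i - I_h E_i w_i$, and then bound $\|\nabla v\|_{0,K} \lesssim h_K^{1/2}\|E_iw_i\|_{3/2,S_K}$ from \eqref{interpolationSZ-error} and $|\nabla v|_{1/2,K}$ by the interpolation-space version of the same estimate. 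Collecting the three bounds gives $|||e|||^2 \lesssim h\|w\|_{3/2,\Omega_1\cup\Omega_2}^2$, which is \eqref{errorEst-interpolation}.
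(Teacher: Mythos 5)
Your treatment of the gradient term and of the jump term coincides with the paper's proof: both use the Scott--Zhang estimate \eqref{interpolationSZ-error} with $j=0,1$, the scaled trace inequality $\|v\|_{0,\Gamma_K}^2\lesssim h_K^{-1}\|v\|_{0,K}^2+h_K\|v\|_{1,K}^2$ on the cut pieces, the extension stability \eqref{extension-pisewise}, and finite overlap of the patches $S_K$. Those two parts are fine.

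The flux term is where your proposal does not close. The inequality you ultimately rest on, $h_K\|\nabla v\|_{0,\Gamma_K}^2\lesssim\|\nabla v\|_{0,K}^2+h_K^{2s}|\nabla v|_{s,K}^2$ with $s=\tfrac12$, is precisely the endpoint case of the scaled trace estimate: the trace map from $H^{1/2}(K)$ to $L^2(\Gamma_K)$ is not bounded, and the standard inequality is available only for $s>\tfrac12$, with a constant that blows up as $s\downarrow\tfrac12$. Your own paragraph names this as ``the main obstacle'' but the proposed remedy (real interpolation of the $j=0,1$ local estimates before taking traces) does not manufacture the missing endpoint inequality, so the bound $\|\{\nabla_n(w-I_h^*w)\}\|_{-\frac12,h,\Gamma}^2\lesssim h\|w\|_{\frac32,\Omega_1\cup\Omega_2}^2$, and hence \eqref{errorEst-interpolation}, is not actually established. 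Note also that you are asking this term to do more than it needs to: in $\|\cdot\|_{-\frac12,h,\Gamma}$ the weight $h_K$ already supplies the entire factor $h$, so no interpolation-error smallness of $\nabla(w-I_h^*w)$ on $\Gamma_K$ is required. The paper's proof exploits exactly this: it bounds $\|\nabla_n(w_i-I_hE_iw_i)\|_{0,\Gamma_K}^2\lesssim\|E_iw_i\|_{\frac32,K}^2$ as a pure trace/stability statement (no extra power of $h$), and then concludes via $\sum_{K\in G_h}h_K\|E_iw_i\|_{\frac32,K}^2\le h\,\|E_iw_i\|_{\frac32,\Omega}^2\lesssim h\,\|w\|_{\frac32,\Omega_1\cup\Omega_2}^2$. (To be fair, the paper's ``similarly'' is terse and itself leans on controlling the trace of $\nabla E_iw_i\in H^{1/2}(K)$ on $\Gamma_K$, so you have correctly located the delicate point; if one assumes $H^{3/2+\epsilon}$ regularity, or accepts constants depending on $s>\tfrac12$, your route goes through, but at exactly $s=\tfrac12$ the inequality you invoke is not valid as stated.)
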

\begin{proof}
For $w\in H^1_0 (\Omega)\cap H^\frac{3}{2}(\Omega_1\cup\Omega_2)$ with $w_i:=w|_{\Omega_i}$ $(i=1,2)$, from \eqref{extension-pisewise}-\eqref{interpolation-operator} it follows
\begin{align*}
|w-I_h^* w|_{1,\Omega_1\cup\Omega_2}^2 &= \sum_{i=1}^2 \sum_{K\in \mathscr{T}_h} |w_i-I_{h} E_iw_i|_{1,K\cap\Omega_i}^2
= \sum_{i=1}^2 \sum_{K\in \mathscr{T}_h} |E_iw_i-I_{h} E_iw_i|_{1,K\cap\Omega_i}^2   \\
 &\lesssim \sum_{i=1}^2 h\|E_iw_i\|_{\frac{3}{2},\Omega}^2\lesssim \sum_{i=1}^2 h\|w_i\|_{\frac{3}{2},\Omega_i}^2\lesssim h\|w\|_{\frac{3}{2},\Omega_1\cup\Omega_2}^2.
\end{align*}
In light of \eqref{extension-pisewise}-\eqref{interpolation-operator}  and the  trace inequality, we have

\begin{align*}
\|[w-I_h^* w]\|_{\frac{1}{2},h,\Gamma}^2 &\lesssim \sum_{i=1}^2 \sum_{K\in G_h} h_K^{-1}\|w_i-I_{h}E_i w_i\|_{0,\Gamma_K}^2=\sum_{i=1}^2 \sum_{K\in G_h} h_K^{-1}\|E_iw_i-I_{h}E_i w_i\|_{0,\Gamma_K}^2 \\
 &\lesssim \sum_{i=1}^2 \sum_{K\in G_h} (h_K^{-2}\|E_iw_i-I_{h}E_i w_i\|_{0,K}^2+\|E_iw_i-I_{h}E_i w_i\|_{1,K}^2)\\
 &\lesssim \sum_{i=1}^2 \sum_{K\in G_h} h_K\|E_iw_i\|_{\frac{3}{2},S_K}^2 \\
 &\lesssim \sum_{i=1}^2 h\|E_iw_i\|_{\frac{3}{2},\Omega}^2 \lesssim \sum_{i=1}^2 h\|w_i\|_{\frac{3}{2},\Omega_i}^2\lesssim h\|w\|_{\frac{3}{2},\Omega_1\cup\Omega_2}^2.
\end{align*}
Similarly, we obtain
\begin{align*}
\|\{\nabla_n (w-I_h^* w)\}\|_{-\frac{1}{2},h,\Gamma}^2 &\lesssim \sum_{i=1}^2 \sum_{K\in G_h} h_K\|\nabla_n(w_i-I_{h} E_i w_i)\|_{0,\Gamma_K}^2 \\
&\lesssim \sum_{i=1}^2 \sum_{K\in G_h} h_K \|E_iw_i\|_{\frac{3}{2},K}^2 \\
&\lesssim \sum_{i=1}^2 h \|E_iw_i\|_{\frac{3}{2},\Omega}^2 \lesssim \sum_{i=1}^2 h \|w_i\|_{\frac{3}{2},\Omega_i}^2 \lesssim h \|w\|_{\frac{3}{2},\Omega_1\cup\Omega_2}^2.
\end{align*}
Together with  above three estimations we yield the desired conclusion.
\end{proof}

In view of the above lemma, we can  obtain  the following   error estimates for the cut finite element schemes \eqref{cut-stateEqu}-\eqref{cut-costateEqu} under  milder regularity requirement.
\begin{theorem}
Under the assumption (R1), let $y^*,p^* \in H^1_0(\Omega)\cap H^{3/2}(\Omega_1 \cup \Omega_2)$ be the solutions to the continuous problems \eqref{stateEqu3-continue}-\eqref{costateEqu3-continue} respectively, and  let  $y^h, p^h\in V^h$ be the solutions to  the discrete schemes \eqref{cut-stateEqu}-\eqref{cut-costateEqu} respectively. Then we have
\begin{align}
|||y^*-y^h|||\lesssim h^\frac{1}{2} \|y^*\|_{\frac{3}{2},\Omega_1\cup\Omega_2},\label{error-meshnorm-state-lowRegularity}\\
|||p^*-p^h|||\lesssim h^\frac{1}{2} \|p^*\|_{\frac{3}{2},\Omega_1\cup\Omega_2}\label{error-meshnorm-costate-lowRegularity},\\
\|y^*-y^h\|_{0,\Omega} \lesssim h \|y^*\|_{\frac{3}{2},\Omega_1\cup\Omega_2},\label{error-l2norm-state-lowRegularity}\\
\|p^*-p^h\|_{0,\Omega} \lesssim h \|p^*\|_{\frac{3}{2},\Omega_1\cup\Omega_2}\label{error-l2norm-costate-lowRegularity}.
\end{align}
\end{theorem}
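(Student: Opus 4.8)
The plan is to prove the four estimates in the order: mesh-norm estimate for the state, mesh-norm estimate for the co-state, $L^2$ estimate for the state, $L^2$ estimate for the co-state. The mesh-norm bounds follow from a standard Céa-type argument. Starting from the consistency relation \eqref{cut-consistent}, namely $a_h(y^*-y^h,w_h)=0$ for all $w_h\in V^h$, I would write $y^*-y^h = (y^*-I_h^* y^*) + (I_h^* y^* - y^h)$, set $\xi_h:=I_h^* y^*-y^h\in V^h$, and use coerciveness \eqref{coercivity-discreteBilinear} together with boundedness \eqref{boundness-discreteBilinear}:
\begin{align*}
|||\xi_h|||^2 &\lesssim a_h(\xi_h,\xi_h) = a_h(I_h^* y^*-y^*,\xi_h) + a_h(y^*-y^h,\xi_h) \\
&= a_h(I_h^* y^*-y^*,\xi_h) \lesssim |||I_h^* y^*-y^*|||\,|||\xi_h|||,
\end{align*}
so $|||\xi_h|||\lesssim |||y^*-I_h^* y^*|||$, and then the triangle inequality plus the interpolation estimate \eqref{errorEst-interpolation} give $|||y^*-y^h|||\lesssim h^{1/2}\|y^*\|_{3/2,\Omega_1\cup\Omega_2}$, which is \eqref{error-meshnorm-state-lowRegularity}. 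The co-state estimate \eqref{error-meshnorm-costate-lowRegularity} is identical, using the second consistency relation in \eqref{cut-consistent} and the symmetry of $a_h(\cdot,\cdot)$ in its treatment of the co-state; note $p^*$ has only $y^*-y_d\in L^2(\Omega)$ as data, so (R1) applies to $p^*$ as well, giving $p^*\in H^{3/2}(\Omega_1\cup\Omega_2)$.

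For the $L^2$ estimates I would use an Aubin--Nitsche duality argument. For \eqref{error-l2norm-state-lowRegularity}, let $e:=y^*-y^h$ and introduce the dual problem: find $\phi\in H_0^1(\Omega)$ with $a(w,\phi)=(e,w)_\Omega$ for all $w\in H_0^1(\Omega)$. Since $e\in L^2(\Omega)$, assumption (R1) gives $\phi\in H^{3/2}(\Omega_1\cup\Omega_2)$ with $\|\phi\|_{3/2,\Omega_1\cup\Omega_2}\lesssim\|e\|_{0,\Omega}$. The subtlety here is that $e=y^*-y^h$ is not in $H^1_0(\Omega)$ (it has a jump across $\Gamma$), so I cannot simply plug it into the continuous weak form; instead I would work with the bilinear form $a_h$, which is well-defined on $H^{3/2}(\Omega_1\cup\Omega_2)+V^h$, and use that $a_h$ restricted to genuinely $H^1_0$ functions agrees with $a$ on the relevant terms (the jump and flux-jump terms vanish for $\phi\in H^1_0(\Omega)\cap H^{3/2}(\Omega_1\cup\Omega_2)$ since $[\phi]=0$, while $[\,\{a\nabla_n\phi\}\,]$ pairs against $[e]\neq 0$). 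Writing $\|e\|_{0,\Omega}^2=a_h(e,\phi)$ — which requires checking a Galerkin-orthogonality-type identity $a_h(e,\phi)=(e,e)_\Omega$ coming from the consistency of the scheme and the definition of $\phi$ — and then using Galerkin orthogonality $a_h(e,I_h^*\phi)=0$, I get $\|e\|_{0,\Omega}^2 = a_h(e,\phi-I_h^*\phi)\lesssim |||e|||\,|||\phi-I_h^*\phi||| \lesssim |||e|||\cdot h^{1/2}\|\phi\|_{3/2,\Omega_1\cup\Omega_2}\lesssim |||e|||\cdot h^{1/2}\|e\|_{0,\Omega}$. Dividing by $\|e\|_{0,\Omega}$ and inserting the already-proved mesh-norm bound \eqref{error-meshnorm-state-lowRegularity} yields $\|e\|_{0,\Omega}\lesssim h^{1/2}\cdot h^{1/2}\|y^*\|_{3/2,\Omega_1\cup\Omega_2}=h\|y^*\|_{3/2,\Omega_1\cup\Omega_2}$. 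The $L^2$ co-state estimate \eqref{error-l2norm-costate-lowRegularity} follows the same template with roles adjusted.

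The main obstacle I anticipate is the duality step: carefully justifying the identity $\|e\|_{0,\Omega}^2 = a_h(e,\phi)$ and the orthogonality $a_h(e,\phi-I_h^*\phi)=0$ when $e$ has a nonzero jump across $\Gamma$. One must verify that the extra Nitsche terms in $a_h$ involving $[\phi]$ drop out (because $\phi\in H^1_0(\Omega)$ is continuous across $\Gamma$, so $[\phi]=0$ pointwise on $\Gamma$), that $\{a\nabla_n\phi\}$ is the genuine normal flux (which is continuous for the true solution $\phi$, so the consistency lemma's argument applies), and that $a_h(y^*-y^h,\cdot)$ vanishes on $V^h$ exactly as in \eqref{cut-consistent}. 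Everything else — the Céa argument, the triangle inequalities, invoking \eqref{errorEst-interpolation}, \eqref{boundness-discreteBilinear}, \eqref{coercivity-discreteBilinear} — is routine. I would also remark that the $h^{1/2}$ gain in the duality argument is exactly what upgrades the $h^{1/2}$ mesh-norm rate to the $h^{1}$ rate in $L^2$, mirroring the classical relation between energy-norm and $L^2$ convergence but at the reduced regularity level $H^{3/2}$.
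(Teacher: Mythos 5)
Your proposal is correct, and its overall architecture coincides with the paper's: the mesh-norm bounds \eqref{error-meshnorm-state-lowRegularity}--\eqref{error-meshnorm-costate-lowRegularity} are obtained exactly as you do, by combining the consistency relation \eqref{cut-consistent}, coerciveness \eqref{coercivity-discreteBilinear}, boundedness \eqref{boundness-discreteBilinear} and the interpolation estimate \eqref{errorEst-interpolation} in a C\'ea-type argument, and the $L^2$ bounds come from an Aubin--Nitsche duality argument with the dual problem regularized by (R1). The one genuine difference is in how the duality step is bookkept. The paper introduces the CutFEM approximation $z_h\in V^h$ of the dual solution $z$, applies the already-proved mesh-norm estimate to the pair $(z,z_h)$, and uses the consistency of both discrete problems together with $[z]=[y^*]=0$ to arrive at $\|y^*-y^h\|_{0,\Omega}^2=a_h(z-z_h,\,y^*-y^h)$; all ingredients are lemmas already on record. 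You instead test with the dual solution $\phi$ itself, which forces you to establish the adjoint-consistency identity $a_h(y^*-y^h,\phi)=\|y^*-y^h\|_{0,\Omega}^2$ by elementwise integration by parts (it holds: $[\phi]=0$ removes the penalty and one Nitsche term, and $[a\nabla_n\phi]=0$ makes the interface term $([y^*-y^h],\{a\nabla_n\phi\})_\Gamma$ cancel the remaining Nitsche term), and then subtract $I_h^*\phi$ by Galerkin orthogonality and invoke \eqref{errorEst-interpolation}. This is a legitimate variant and gives the same $h^{1/2}\cdot h^{1/2}$ product; what it costs is that \eqref{cut-consistent} is stated only for discrete test functions, so the identity you flag as the ``main obstacle'' must indeed be verified by hand (essentially re-running the consistency computation of Hansbo--Hansbo with a non-discrete test function), and at $H^{3/2}$ regularity one should note that the normal flux of $\phi$ on $\Gamma$ is meaningful because $\nabla\cdot(a\nabla\phi)\in L^2(\Omega)$ -- a point the paper glosses over at the same level. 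Your correct observation that $p^*$ falls under (R1) with data $y^*-y_d\in L^2(\Omega)$, and your use of the symmetry of $a_h$, match the paper's treatment of the co-state.
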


\begin{proof}
The estimates \eqref{error-meshnorm-state-lowRegularity}-\eqref{error-meshnorm-costate-lowRegularity} follow from \eqref{errorEst-interpolation}, the discrete coerciveness \eqref{coercivity-discreteBilinear}, and the triangle inequality directly. It remains to show \eqref{error-l2norm-state-lowRegularity}) by using the Nitsche's technique, since \eqref{error-l2norm-costate-lowRegularity} follows similarly.

Consider the interface problem
\begin{align*}
\left\{
\begin{array}{rll}
 & -\nabla\cdot(a(x)\nabla z)=y^*-y^h& \text{in }\Omega, \\
 & z=0& \text{on }\partial\Omega, \\
 & [z]=0,[a\nabla_n z]=0& \text{on }\Gamma.
\end{array}
\right.
\end{align*}
whose equivalent weak problem reads:  find $z\in H_0^1(\Omega)$ satisfying
\begin{equation} \label{proof1-errorLowRegularity}
 a(z,w)=(y^*-y^h,w)_\Omega,~~\forall w\in H_0^1(\Omega).
\end{equation}
Then by the assumption (R1), we have $z\in H^\frac{3}{2}(\Omega_1\cup \Omega_2)$ and
\begin{equation*}
\|z\|_{\frac{3}{2},\Omega_1\cup \Omega_2}\lesssim \|y^*-y^h\|_{0,\Omega}.
\end{equation*}
Let $z_h\in V^h$ denote the CutFEM approximation of $z$, which means that
\begin{equation} \label{proof2-errorLowRegularity}
z_h\in V^h: a_h (z_h,w_h)=(y^*-y^h,w_h)_\Omega,\forall w_h\in V^h.
\end{equation}
Similar with \eqref{error-meshnorm-state-lowRegularity}, we derive that
\begin{align}
|||z-z_h||| &\lesssim h^\frac{1}{2} \|z\|_{\frac{3}{2},\Omega_1\cup\Omega_2} \notag  \\
            &\lesssim h^\frac{1}{2} \|y^*-y^h\|_{0,\Omega} \label{proof3-errorLowRegularity}.
\end{align}
In \eqref{proof1-errorLowRegularity} with $y^*\in H_0^1 (\Omega)$ as the test function we have
\begin{equation}\label{proof4-errorLowRegularity}
a(z,y^*)=(y^*-y^h,y^*)_\Omega.
\end{equation}
With the consistency \eqref{cut-consistent} we have
\begin{equation}\label{proof5-errorLowRegularity}
a_h(z,y^h)=a_h(z_h,y^h)=a_h(z_h,y^*),
\end{equation}
Together with \eqref{proof2-errorLowRegularity}, \eqref{proof4-errorLowRegularity}, the interface conditions $[z]|_\Gamma=0$, and the boundedness \eqref{boundness-discreteBilinear}, we have
\begin{align*}
& \|y^*-y^h\|_{0,\Omega}^2 =(y^*-y^h,y^*)_\Omega-(y^*-y^h,y^h)_\Omega\\
& = a(z,y^*)-a_h(z,y^h) \\
& = a_h(z-z_h,y^*-y^h)\\
& \lesssim |||z-z_h|||~|||y^*-y^h|||.
\end{align*}
In addition with \eqref{error-meshnorm-state-lowRegularity} and \eqref{proof3-errorLowRegularity}, we have
\begin{align*}
\|y^*-y^h\|_{0,\Omega}^2
& \lesssim h^\frac{1}{2} h^\frac{1}{2} \|z\|_{\frac{3}{2},\Omega_1\cup \Omega_2} \|y^*\|_{\frac{3}{2},\Omega_1\cup \Omega_2} \\
& \lesssim h \|y^*-y^h\|_{0,\Omega} \|y^*\|_{\frac{3}{2},\Omega_1\cup \Omega_2},
\end{align*}
which implies the desired result \eqref{error-l2norm-state-lowRegularity}. This completes the proof.
\end{proof}
\begin{remark}
Notice that estimations \eqref{error-meshnorm-state-lowRegularity}-\eqref{error-meshnorm-costate-lowRegularity} are optimal, which indicate
$$|y^*-y^h|_{1,\Omega_1\cup\Omega_2}\lesssim h^\frac{1}{2} \|y^*\|_{\frac{3}{2},\Omega_1\cup\Omega_2},\quad |p^*-p^h|_{1,\Omega_1\cup\Omega_2}\lesssim h^\frac{1}{2} \|p^*\|_{\frac{3}{2},\Omega_1\cup\Omega_2}.$$
\end{remark}

In what follows, we'll show that the convex combination $\kappa_2 y^h_1+\kappa_1 y^h_2$ and $\kappa_2 p^h_1+\kappa_1 p^h_2$ are  ``good'' approximations to $y^*$ and $p^*$  on $\Gamma$ respectively (recall that $y^h_i:=y^h|_{\Omega_i}, p^h_i:=p^h|_{\Omega_i}$, for $i=1,2$).

\begin{theorem}
Let $y^*,p^* \in H^1_0(\Omega)\cap H^s (\Omega_1 \cup \Omega_2)$ ($s=3/2, 2$) be the solutions of continuous problems \eqref{stateEqu3-continue}-\eqref{costateEqu3-continue} respectively, and $y^h, p^h\in V^h$ be the solutions of discrete schemes \eqref{cut-stateEqu}-\eqref{cut-costateEqu} respectively. Then for $s=3/2, 2$, we have
\begin{align}
&\|y^*-(\kappa_2 y^h_1+\kappa_1 y^h_2)\|_{0,\Gamma}\lesssim h^{s-\frac{1}{2}} \|y^*\|_{s,\Omega_1\cup\Omega_2},\label{errorInterface-state-lowRegularity} \\
&\|p^*-(\kappa_2 p^h_1+\kappa_1 p^h_2)\|_{0,\Gamma}\lesssim h^{s-\frac{1}{2}} \|p^*\|_{s,\Omega_1\cup\Omega_2}.\label{errorInterface-costate-lowRegularity}
\end{align}
\end{theorem}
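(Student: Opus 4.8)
The plan is to use a duality (Aubin--Nitsche) argument adapted to the boundary functional $w\mapsto\|\langle w\rangle\|_{0,\Gamma}$, where throughout I write $\langle w\rangle:=\kappa_2 w_1+\kappa_1 w_2$ on $\Gamma$ for the convex combination appearing in \eqref{errorInterface-state-lowRegularity}--\eqref{errorInterface-costate-lowRegularity} and on the right-hand side of \eqref{cut-stateEqu}. It suffices to treat the state $y^*$, since the co-state is handled by the identical computation. Since $[y^*]=0$ gives $y^*|_\Gamma=(\kappa_1+\kappa_2)\,y^*|_\Gamma=\langle y^*\rangle$, the left-hand side of \eqref{errorInterface-state-lowRegularity} equals $\|\langle e\rangle\|_{0,\Gamma}$ with $e:=y^*-y^h$, and since $\langle e\rangle\in L^2(\Gamma)$ it is enough to bound $(\eta,\langle e\rangle)_\Gamma$ for an arbitrary $\eta\in L^2(\Gamma)$ with $\|\eta\|_{0,\Gamma}=1$.

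First I would introduce the dual interface problem: find $z\in H^1_0(\Omega)$ with $a(z,w)=(\eta,w)_\Gamma$ for all $w\in H^1_0(\Omega)$. This is \eqref{weakFormulation-continue} with $f=0$ and $g+u$ replaced by $\eta\in L^2(\Gamma)$, so (R1) yields $z\in H^1_0(\Omega)\cap H^{3/2}(\Omega_1\cup\Omega_2)$ with $\|z\|_{\frac32,\Omega_1\cup\Omega_2}\lesssim\|\eta\|_{0,\Gamma}$, and the associated strong form satisfies $[z]=0$ and $[a\nabla_n z]=\eta$ on $\Gamma$. The crucial algebraic step is the consistency identity $a_h(z,w_h)=(\eta,\langle w_h\rangle)_\Gamma$ for every $w_h\in V^h$, proved exactly as the consistency \eqref{cut-consistent} of the state scheme: integrate $-\nabla\cdot(a\nabla z)=0$ by parts elementwise against $w_h$, and use $[z]=0$ together with the pointwise identity $(a\nabla_n z)_1 w_{h,1}-(a\nabla_n z)_2 w_{h,2}=[a\nabla_n z]\langle w_h\rangle+\{a\nabla_n z\}[w_h]$ on each $\Gamma_K$ (valid because $\kappa_1+\kappa_2=1$), so that the flux jump $[a\nabla_n z]=\eta$ produces precisely $(\eta,\langle w_h\rangle)_\Gamma$. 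Since $[z]=[y^*]=0$, one also has $a_h(z,y^*)=a(z,y^*)=(\eta,y^*)_\Gamma=(\eta,\langle y^*\rangle)_\Gamma$.

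Combining these identities gives
\[
(\eta,\langle e\rangle)_\Gamma=(\eta,\langle y^*\rangle)_\Gamma-(\eta,\langle y^h\rangle)_\Gamma=a_h(z,y^*)-a_h(z,y^h)=a_h(z,e).
\]
Because $a_h(\cdot,\cdot)$ is symmetric, $a_h(z,e)=a_h(e,z)$; and since the Galerkin orthogonality \eqref{cut-consistent} gives $a_h(e,w_h)=0$ for all $w_h\in V^h$, we get $a_h(e,z)=a_h(e,z-I_h^* z)$. Now apply the boundedness \eqref{boundness-discreteBilinear}, the interpolation estimate \eqref{errorEst-interpolation} for $z$, and the mesh-norm error bound for $e$ --- namely \eqref{error-meshnorm-state-lowRegularity} when $s=3/2$ and \eqref{errorMeshnorm-cutH2regularity-state} when $s=2$, which together read $|||e|||\lesssim h^{s-1}\|y^*\|_{s,\Omega_1\cup\Omega_2}$ --- to obtain
\[
(\eta,\langle e\rangle)_\Gamma=a_h(e,z-I_h^* z)\lesssim|||e|||\,|||z-I_h^* z|||\lesssim h^{s-1}\|y^*\|_{s,\Omega_1\cup\Omega_2}\cdot h^{\frac12}\|z\|_{\frac32,\Omega_1\cup\Omega_2}\lesssim h^{s-\frac12}\|y^*\|_{s,\Omega_1\cup\Omega_2}.
\]
Taking the supremum over such $\eta$ proves \eqref{errorInterface-state-lowRegularity}; running the identical argument with $(p^*,p^h)$ in place of $(y^*,y^h)$, using the co-state orthogonality in \eqref{cut-consistent} and the co-state estimates \eqref{errorMeshnorm-cutH2regularity-costate}, \eqref{error-meshnorm-costate-lowRegularity}, proves \eqref{errorInterface-costate-lowRegularity}.

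I expect the main obstacle to be the consistency identity $a_h(z,w_h)=(\eta,\langle w_h\rangle)_\Gamma$: one must check carefully that the weighted average produced by the elementwise integration by parts is the complementary combination $\langle\cdot\rangle=\kappa_2(\cdot)_1+\kappa_1(\cdot)_2$ (and not $\{\cdot\}=\kappa_1(\cdot)_1+\kappa_2(\cdot)_2$), so that it matches both the right-hand side of \eqref{cut-stateEqu} and the quantity being estimated, and one must also handle the (standard but delicate) normal-trace interpretation for $H^{3/2}$ functions that is already built into \eqref{boundness-discreteBilinear}. By contrast, a more elementary splitting $\langle e\rangle=\langle y^*-I_h^* y^*\rangle+\langle I_h^* y^*-y^h\rangle$ would dispose of the interpolation part via the trace inequality and the Scott--Zhang estimates, but the discrete remainder $\langle I_h^* y^*-y^h\rangle$ on an interface element $K$ with a very small $K_i$ is not controlled by $|||\cdot|||$ alone, which is why the duality route is preferable.
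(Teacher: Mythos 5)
Your proposal is correct and follows essentially the same Aubin--Nitsche duality argument as the paper: a dual interface problem with flux-jump data on $\Gamma$, the regularity assumption (R1), the consistency of the Nitsche scheme, the boundedness \eqref{boundness-discreteBilinear}, and the previously established mesh-norm error estimates. The only cosmetic differences are that you test against an arbitrary normalized $\eta\in L^2(\Gamma)$ rather than taking the trace error itself as dual data, and you conclude via the interpolant $I_h^* z$ together with Galerkin orthogonality for $y^*-y^h$ instead of introducing the discrete dual solution $z_h$ as the paper does.
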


\begin{proof} It suffices to show  \eqref{errorInterface-state-lowRegularity}, since \eqref{errorInterface-costate-lowRegularity} follows similarly. We'll also use Nitsche's technique. Let $z$ be the weak solution of following  interface problem
\begin{align*}
\left\{
\begin{array}{rll}
 & -\nabla\cdot(a(x)\nabla z)=0,& \text{in }\Omega, \\
 & z=0,& \text{on }\partial\Omega, \\
 & [z]=0,[a\nabla_n z]=y^*-(\kappa_2 y^h_1+\kappa_1 y^h_2).& \text{on }\Gamma. \\
\end{array}
\right.
\end{align*}
Whose weak formulation reads: find $z\in H_0^1(\Omega)$ satisfying
\begin{equation}\label{proof1-errorInterface-LowRegularity}
a(z,w)=\left(y^*-(\kappa_2 y^h_1+\kappa_1 y^h_2),w\right)_\Gamma,~~\forall w\in H_0^1(\Omega).
\end{equation}
Since $y^*-(\kappa_2 y^h_1+\kappa_1 y^h_2)\in L^2(\Gamma)$, we get $z\in H_0^1(\Omega)\cap H^\frac{3}{2}(\Omega_1\cup \Omega_2)$ and
\begin{equation*}
\|z\|_{\frac{3}{2},\Omega_1\cup \Omega_2}\lesssim \|y^*-(\kappa_2 y^h_1+\kappa_1 y^h_2)\|_{0, \Gamma}.
\end{equation*}
Let $z_h\in V^h$ denote the CutFEM approximation of $z$, which means $z_h$ satisfies
\begin{equation}\label{proof2-errorInterface-LowRegularity}
a_h(z_h,w_h)=(y^*-(\kappa_2 y^h_1+\kappa_1 y^h_2),\kappa_2 w_{h,1}+\kappa_2 w_{h,2})_\Gamma,~~\forall w_h\in V^h.
\end{equation}
Similar with \eqref{error-meshnorm-state-lowRegularity}, we have
\begin{align}
|||z-z_h||| &\lesssim h^\frac{1}{2} \|z\|_{\frac{3}{2},\Omega_1\cup\Omega_2}\notag \\
            &\lesssim h^\frac{1}{2} \|y^*-(\kappa_2 y^h_1+\kappa_1 y^h_2)\|_{0,\Gamma}\label{proof3-errorInterface-LowRegularity}.
\end{align}
In \eqref{proof2-errorInterface-LowRegularity} with $y^*\in H_0^1 (\Omega)$ as the test function we have
\begin{equation}\label{proof4-errorInterface-LowRegularity}
a(z,y^*)=(y^*-(\kappa_2 y^h_1+\kappa_1 y^h_2),y^*)_\Gamma.
\end{equation}
By the consistency \eqref{cut-consistent} we have
\begin{equation}\label{proof5-errorInterface-LowRegularity}
a_h(z,y^h)=a_h(z_h,y^h)=a_h(z_h,y^*),
\end{equation}
which,  together with  \eqref{proof2-errorInterface-LowRegularity}, \eqref{proof4-errorInterface-LowRegularity},  the interface conditions $[z]|_\Gamma=0$, and the boundedness \eqref{boundness-discreteBilinear}, indicates
\begin{align}
 \|y^*-(\kappa_2 y^h_1+\kappa_1 y^h_2)\|_{0,\Gamma}^2 &=(y^*-(\kappa_2 y^h_1+\kappa_1 y^h_2),y^*)_\Gamma-(y^*-(\kappa_2 y^h_1+\kappa_1 y^h_2),\kappa_2 y^h_1+\kappa_1 y^h_2)_\Gamma \nonumber\\
& = a(z,y^*)-a_h(z,y^h) \nonumber\\
&=a_h(z-z_h,y^*-y^h)\nonumber\\
& \lesssim |||z-z_h|||~|||y^*-y^h|||.\label{proof6-errorInterface-LowRegularity}
\end{align}
This inequality, together with \eqref{errorL2norm-cutH2regularity-state}, \eqref{error-l2norm-state-lowRegularity} and the estimation \eqref{proof3-errorInterface-LowRegularity}, yields
\begin{align*}
\|y^*-(\kappa_2 y^h_1+\kappa_1 y^h_2)\|_{0,\Gamma}
& \lesssim h^{\frac{1}{2}}h^{s-1} \|y^*\|_{s,\Omega_1\cup \Omega_2}\\
& \lesssim h^{s-\frac{1}{2}} \|y^*\|_{s,\Omega_1\cup \Omega_2},\quad (s=3/2, 2).
\end{align*}
This completes the proof.
\end{proof}

\section{Discrete optimal control problem}
\subsection{Discrete optimality conditions}

With variational discretization concept (cf. \cite{Hinz05var, Hinz09}), the optimal control problem \eqref{objective-Continue}-\eqref{control-constraint} is approximated by the following  discrete optimal control problem
\begin{equation}\label{objective-discrete}
\min_{y_h\in V^h, u\in U_{ad}} J_h(y_h, u)=\frac{1}{2}\|y_h-y_d\|_{0, \Omega}^2+\frac{\alpha}{2}\|u\|_{0,\Gamma}^2,
\end{equation}
where $y_h=y_h (u)$ satisfies 
\begin{equation}\label{stateEqu-discrete}
a_h(y_h,w_h)=(f,w_h)_\Omega+(g+u,\kappa_2 w_{h,1}+\kappa_1 w_{h,2})_\Gamma,~\forall w_h\in V^h.
\end{equation}

Similar to the continuous case, it holds the following   existence and uniqueness result and optimality conditions.
\begin{lemma}
The discrete optimal control problems \eqref{objective-discrete}-\eqref{stateEqu-discrete} admits a unique solution $(y_h^*,u_h^*)\in V^h\times U_{ad}$, and its equivalent optimality conditions read: find $(y_h^*,p_h^*,u_h^*)\in V^h\times V^h\times U_{ad}$ such that
\begin{align}
 & a_h(y_h^*,w_h)=(f,w_h)_\Omega+(g+u_h^*,\kappa_2 w_{h,1}+\kappa_1 w_{h,2})_\Gamma,\forall w_h\in V^h, \label{stateEqu2-discrete}\\
 & a_h(w_h,p_h^*)=(y_h^*-y_d,w_h)_\Omega,\forall w_h\in V^h, \label{costateEqu-discrete}\\
 &   (\kappa_2 p_{h,1}^*+\kappa_1 p_{h,2}^*+\alpha u_h^*,u-u_h^*)_\Gamma \geq 0,\forall u\in U_{ad}.\label{variational-inequality-discrete}
\end{align}
\end{lemma}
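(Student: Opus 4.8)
**Proof proposal for the final Lemma (existence/uniqueness and optimality conditions for the discrete optimal control problem).**

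The plan is to mirror the argument used in the continuous case, replacing the solution operator of the PDE by its CutFEM counterpart. First I would establish that, for every $u\in L^2(\Gamma)$, the discrete state equation \eqref{stateEqu-discrete} has a unique solution $y_h=y_h(u)\in V^h$; this is immediate from the discrete coercivity \eqref{coercivity-discreteBilinear} and boundedness \eqref{boundness-discreteBilinear} of $a_h(\cdot,\cdot)$ on the finite-dimensional space $V^h$ (Lax--Milgram, or simply invertibility of a positive-definite matrix), together with the boundedness of the linear functional $w_h\mapsto (f,w_h)_\Omega+(g+u,\kappa_2 w_{h,1}+\kappa_1 w_{h,2})_\Gamma$, which uses a trace inequality on each interface element. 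Moreover $u\mapsto y_h(u)$ is affine and continuous from $L^2(\Gamma)$ into $(V^h,|||\cdot|||)$, hence into $L^2(\Omega)$ by \eqref{poincare-inequality}.

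Next I would introduce the reduced discrete functional $\tilde J_h(u):=J_h(y_h(u),u)$ on the closed, convex, bounded (if $u_a,u_b$ are bounded; in any case closed convex and nonempty) set $U_{ad}\subset L^2(\Gamma)$. Since $y_h(u)$ is affine in $u$ and $J_h$ is a sum of squared norms, $\tilde J_h$ is continuous and strictly convex (the $\frac{\alpha}{2}\|u\|_{0,\Gamma}^2$ term is strictly convex and $\alpha>0$, while the state-tracking term is convex), and it is coercive on $L^2(\Gamma)$. Standard results on minimization of strictly convex coercive functionals over closed convex sets then give existence and uniqueness of the minimizer $u_h^*\in U_{ad}$, with $y_h^*:=y_h(u_h^*)$.

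For the optimality conditions, I would compute the Gâteaux derivative of $\tilde J_h$ at $u_h^*$ in the admissible direction $u-u_h^*$, $u\in U_{ad}$, obtaining the variational inequality $\tilde J_h'(u_h^*)(u-u_h^*)\ge 0$. To express this derivative I would introduce the discrete adjoint state $p_h^*\in V^h$ solving \eqref{costateEqu-discrete}, which again is well posed by \eqref{coercivity-discreteBilinear}; testing the linearized state equation (the derivative $\delta y_h$ of $y_h$ in direction $u-u_h^*$ satisfies $a_h(\delta y_h,w_h)=(u-u_h^*,\kappa_2 w_{h,1}+\kappa_1 w_{h,2})_\Gamma$) against $p_h^*$ and using \eqref{costateEqu-discrete} with $w_h=\delta y_h$ converts the term $(y_h^*-y_d,\delta y_h)_\Omega$ into $(u-u_h^*,\kappa_2 p_{h,1}^*+\kappa_1 p_{h,2}^*)_\Gamma$. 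Adding the contribution $\alpha(u_h^*,u-u_h^*)_\Gamma$ from the regularization term yields exactly \eqref{variational-inequality-discrete}. Conversely, since $\tilde J_h$ is convex, the system \eqref{stateEqu2-discrete}--\eqref{variational-inequality-discrete} is also sufficient, so the optimality conditions are equivalent to optimality.

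The only mildly delicate point — and the place I would be most careful — is the algebraic identity linking the adjoint equation to the derivative of the tracking term: one must note that the bilinear form $a_h$ is \emph{not} symmetric (because of the consistency terms $-([y^h],\{a\nabla_n w_h\})_\Gamma$ versus $-(\{a\nabla_n y^h\},[w_h])_\Gamma$), so the adjoint is defined via $a_h(w_h,p_h^*)=(y_h^*-y_d,w_h)_\Omega$ with $p_h^*$ in the \emph{second} slot, matching \eqref{costateEqu-discrete}; the cross-testing $a_h(\delta y_h,p_h^*)=(y_h^*-y_d,\delta y_h)_\Omega$ then works directly with no transpose needed. Everything else is routine finite-dimensional convex optimization, and I would keep the write-up brief, as the statement only claims the analogue of the already-proven continuous Lemma.
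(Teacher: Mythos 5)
Your argument is correct and takes essentially the same route as the paper, which offers no separate proof for the discrete lemma but simply appeals to the continuous case, i.e.\ the reduced-functional argument you carry out: well-posedness of the discrete state map via the coercivity \eqref{coercivity-discreteBilinear} and boundedness \eqref{boundness-discreteBilinear}, strict convexity and continuity of $\tilde J_h$ on the closed convex set $U_{ad}$, and the adjoint-state computation turning $\tilde J_h'(u_h^*)(u-u_h^*)\ge 0$ into \eqref{variational-inequality-discrete}. One small correction: the form $a_h(\cdot,\cdot)$ is in fact symmetric (both Nitsche consistency terms $-([y],\{a\nabla_n w\})_\Gamma$ and $-(\{a\nabla_n y\},[w])_\Gamma$ appear), so your cautionary remark about a non-symmetric adjoint is unnecessary, though your handling of the adjoint in the second slot is harmless and still valid.
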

\begin{remark}
Actually the discrete optimal control $u_h^*\in U_{ad}$   is not directly discretized in the objective functional \eqref{objective-discrete}, since $U_{ad}$ is infinite dimensional. In fact,  the variational inequality \eqref{variational-inequality-discrete} implies that   $u_h^*$ is implicitly discretized through the discrete co-state $p_h^*$ and   the projection $P_{U_{ad}}$  (cf. \eqref{control-projection}) with
\begin{equation*}
u_h^*=P_{U_{ad}}\left(-\frac{\kappa_2 p_{h,1}^*|_\Gamma+\kappa_1 p_{h,2}^*|_\Gamma}{\alpha}\right),
\end{equation*}
as is one  main feature of the variational discretitization concept.
\end{remark}

\subsection{Error estimates}

Firstly let's show that, the errors in $L^2$-norm  or  $|||\cdot|||$-norm between $(y^*,p^*,u^*)$ and $(y_h^*,p_h^*,u_h^*)$, which are the solutions of continuous  optimal control problem  \eqref{stateEqu2-continue}-\eqref{variational-inequality-continue} and  discrete optimal control problem \eqref{stateEqu2-discrete}-\eqref{variational-inequality-discrete} respectively,  is bounded from above by the errors  between $(y^*,p^*)$ and $(y^h,p^h)$, which are the solutions of  \eqref{stateEqu2-continue}-\eqref{costateEqu-continue}  and discrete schemes  \eqref{cut-stateEqu}-\eqref{cut-costateEqu} respectively.
\begin{theorem}
Let $(y^*,p^*,u^*)\in H_0^1(\Omega)\times H_0^1(\Omega)\times U_{ad}$ and $(y_h^*,p_h^*,u_h^*)\in V^h\times V^h\times U_{ad}$ be  the solutions of continuous problem \eqref{stateEqu2-continue}-\eqref{variational-inequality-continue} and discrete problem \eqref{stateEqu2-discrete}-\eqref{variational-inequality-discrete} respectively. Then we have
\begin{align}
\sqrt{\alpha} \|u^*-u_h^*\|_{0,\Gamma}+\|y^*-y_h^*\|_{0,\Omega}\leq & \sqrt{2}\|y^*-y^h\|_{0,\Omega}+\frac{\sqrt{2}}{\sqrt{\alpha}}\|p^*-(\kappa_2 p^h_1+\kappa_1 p^h_2)\|_{0,\Gamma} \label{0error-l2-state}\\
\|p^*-p_h^*\|_{0,\Omega} \lesssim &\|p^*-p^h\|_{0,\Omega} + \|y^*-y_h^*\|_{0,\Omega} \label{0error-l2-costate}\\
|||y^*-y_h^*||| \lesssim &|||y^*-y^h|||+\|u^*-u_h^*\|_{0,\Gamma} \label{0error-meshnorm-state} \\
|||p^*-p_h||| \lesssim &|||p^*-p^h|||+\|y^*-y_h^*\|_{0,\Omega}.\label{0error-meshnorm-costate}
\end{align}
where $y^h, p^h\in V^h$ are the cut finite element solutions of discrete schemes \eqref{cut-stateEqu}-\eqref{cut-costateEqu} respectively.
\end{theorem}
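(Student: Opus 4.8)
The plan is to establish each of the four estimates by combining auxiliary "intermediate" solutions with the optimality conditions, following the standard perturbation argument for variational discretization. Throughout, I would introduce two auxiliary functions: let $y^*(u_h^*)\in H^1_0(\Omega)$ solve the continuous state equation with control $u_h^*$, and let $y_h(u^*)\in V^h$ solve the discrete state equation \eqref{stateEqu-discrete} with control $u^*$; similarly introduce the corresponding auxiliary co-states. The key algebraic identity to exploit is that the map $u\mapsto y(u)$ (continuous or discrete) is affine, so differences of state solutions with different controls solve the homogeneous state equation with right-hand side $(u-\tilde u,\cdot)_\Gamma$.

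\textbf{Step 1: the control and state $L^2$-estimate \eqref{0error-l2-state}.} I would test the continuous variational inequality \eqref{variational-inequality-continue} with $u=u_h^*$ and the discrete one \eqref{variational-inequality-discrete} with $u=u^*$, then add the two inequalities. This yields
\[
(p^*+\alpha u^*,u_h^*-u^*)_\Gamma + (\kappa_2 p_{h,1}^*+\kappa_1 p_{h,2}^*+\alpha u_h^*,u^*-u_h^*)_\Gamma \ge 0,
\]
which rearranges to
\[
\alpha\|u^*-u_h^*\|_{0,\Gamma}^2 \le \big(p^*-(\kappa_2 p_{h,1}^*+\kappa_1 p_{h,2}^*),\,u_h^*-u^*\big)_\Gamma.
\]
Now I would split the co-state difference on the right as $\big(p^*-(\kappa_2 p_1^h+\kappa_1 p_2^h)\big) + \big((\kappa_2 p_1^h+\kappa_1 p_2^h)-(\kappa_2 p_{h,1}^*+\kappa_1 p_{h,2}^*)\big)$. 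For the second piece, using the adjoint equations \eqref{costateEqu-discrete}, \eqref{costateEqu2-discrete} and the discrete state equations, the pairing against $u_h^*-u^*$ on $\Gamma$ can be rewritten — by choosing $p^h-p_h^*$ as test function in the state equations and $y^h-y_h^*$ as test function in the adjoint equations, exploiting the convex-combination structure of the boundary term in \eqref{stateEqu-discrete} — as $-\|y^h-y_h^*\|_{0,\Omega}^2$ minus a cross term $(y^*-y_h^*, \ldots)$; the crucial sign is that this contributes a nonpositive quantity after one further manipulation, so it can be dropped. This leaves $\alpha\|u^*-u_h^*\|_{0,\Gamma}^2 + \|y^h - y_h^*\|_{0,\Omega}^2 \lesssim \big(p^*-(\kappa_2 p_1^h+\kappa_1 p_2^h), u_h^*-u^*\big)_\Gamma + (\text{terms controlled by } \|y^*-y^h\|_{0,\Omega})$, and a Young's inequality together with the triangle inequality $\|y^*-y_h^*\|_{0,\Omega}\le \|y^*-y^h\|_{0,\Omega}+\|y^h-y_h^*\|_{0,\Omega}$ gives \eqref{0error-l2-state} with the stated explicit constants $\sqrt2$ and $\sqrt2/\sqrt\alpha$.

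\textbf{Steps 2–4: the remaining three estimates.} For \eqref{0error-l2-costate}, I would use the Nitsche duality argument exactly as in the earlier theorems: $\|p^*-p_h^*\|_{0,\Omega}\le \|p^*-p^h\|_{0,\Omega}+\|p^h-p_h^*\|_{0,\Omega}$, and $p^h-p_h^*\in V^h$ solves a discrete adjoint equation whose data is $y^*-y_h^*$, so by the stability of the CutFEM adjoint solver (coercivity \eqref{coercivity-discreteBilinear} plus duality) $\|p^h-p_h^*\|_{0,\Omega}\lesssim \|y^*-y_h^*\|_{0,\Omega}$. For \eqref{0error-meshnorm-state}, I would write $y^*-y_h^* = (y^*-y^h)+(y^h-y_h^*)$, note $y^h-y_h^*\in V^h$ solves the homogeneous discrete state equation with boundary data $(u^*-u_h^*,\cdot)_\Gamma$, and apply coercivity \eqref{coercivity-discreteBilinear}, the boundedness \eqref{boundness-discreteBilinear}, and a trace inequality on $\Gamma$ bounding $\|u^*-u_h^*\|_{0,\Gamma}$ times $|||y^h-y_h^*|||$; dividing through gives the bound. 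Estimate \eqref{0error-meshnorm-costate} is entirely analogous with the adjoint equation and data $y^*-y_h^*$.

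\textbf{Main obstacle.} The delicate point is Step 1: correctly tracking signs when converting the boundary pairing $\big((\kappa_2 p_1^h+\kappa_1 p_2^h)-(\kappa_2 p_{h,1}^*+\kappa_1 p_{h,2}^*),\,u_h^*-u^*\big)_\Gamma$ into volume inner products via the state/adjoint equations, so that the resulting quadratic term in $y^h-y_h^*$ comes out with the \emph{right} sign to be absorbed (or dropped) rather than fighting the left-hand side. This requires carefully choosing which function plays the role of test function in which equation and using the consistency-type identities \eqref{cut-consistent}-analogues for the discrete pairs; the convex-combination weights $\kappa_1,\kappa_2$ in the boundary term of \eqref{stateEqu-discrete} must match exactly those appearing in \eqref{variational-inequality-discrete} for the cancellation to work, which is precisely why the scheme was set up that way. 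Everything else is routine triangle-inequality and Young's-inequality bookkeeping given the CutFEM stability results already proved.
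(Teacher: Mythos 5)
Your proposal follows essentially the same route as the paper's proof: your auxiliary discrete solutions (the discrete state with control $u^*$ and the discrete co-state with data $y^*-y_d$) are precisely the CutFEM solutions $y^h,p^h$ of \eqref{cut-stateEqu}--\eqref{cut-costateEqu}, and your Step 1 (adding the two variational inequalities and converting the boundary pairing into a volume term by cross-testing the discrete state and adjoint error equations, then Young's inequality) together with Steps 2--4 (coercivity \eqref{coercivity-discreteBilinear} combined with the discrete Poincar\'e inequality \eqref{poincare-inequality}, a trace inequality on $\Gamma$, and triangle inequalities) reproduces the paper's argument. One bookkeeping point: to get the exact constants $\sqrt{2}$ and $\sqrt{2}/\sqrt{\alpha}$ in \eqref{0error-l2-state}, expand the volume term as $-(y_h^*-y^*,y_h^*-y^h)_\Omega\le -\tfrac12\|y^*-y_h^*\|_{0,\Omega}^2+\tfrac12\|y^*-y^h\|_{0,\Omega}^2$, so that $\|y^*-y_h^*\|_{0,\Omega}$ itself sits on the left; with the split you literally describe, namely $-\|y^h-y_h^*\|_{0,\Omega}^2$ plus a cross term, the final triangle inequality degrades the constant on $\|y^*-y^h\|_{0,\Omega}$ to $1+\sqrt{2}$ (the $\lesssim$-estimates \eqref{0error-l2-costate}--\eqref{0error-meshnorm-costate} are unaffected, and no duality argument is needed for \eqref{0error-l2-costate}).
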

\begin{proof}
We firstly show \eqref{0error-l2-state}. By\eqref{cut-stateEqu}-\eqref{cut-costateEqu} and \eqref{stateEqu2-discrete}-\eqref{costateEqu-discrete} we get
\begin{align}
 & a_h(y_h^*-y^h,w_h)= (u_h^*-u^*,\kappa_2 w_{h,1}+\kappa_1 w_{h,2})_\Gamma,\forall w_h\in V^h, \label{err-statd0}\\
 & a_h(w_h,p_h^*-p^h)=(y_h^*-y^*,w_h)_\Omega,\forall w_h\in V^h, \label{err-adjod0}
\end{align}
which yield
\begin{align}
  (u_h^*-u^*,\kappa_2 (p_{h,1}^*-p^h_1)+\kappa_1 (p_{h,2}^*-p^h_2))_\Gamma= (y_h^*-y^*,y_h^*-y^h)_\Omega. \label{err-statd}
\end{align}
From \eqref{variational-inequality-continue}  and \eqref{variational-inequality-discrete} it follows
\begin{align*}
(p^*+\alpha u^*,u_h^*-u^*)_\Gamma\geq 0,\\
(\kappa_2 p_{h,1}^*+\kappa_1 p_{h,2}^*+\alpha u_h^*,u^*-u_h^*)_\Gamma \geq 0.
\end{align*}
Adding the above two inequalities and using  \eqref{err-statd}, we obtain
\begin{align*}
& \alpha \|u^*-u_h^*\|_{0,\Gamma}^2\leq (\kappa_2 p_{h,1}^*+\kappa_1 p_{h,2}^*-p^*,u^*-u_h^*)_\Gamma \\
 & =(\kappa_2 (p_{h,1}^*-p^h_1)+\kappa_1 (p_{h,2}^*-p^h_2),u^*-u_h^*)_\Gamma+(\kappa_2 p^h_1+\kappa_1 p^h_2-p^*,u^*-u_h^*)_\Gamma\\
 & =-(y_h^*-y^*,y_h^*-y^h)_\Omega+(\kappa_2 p^h_1+\kappa_1 p^h_2-p^*,u^*-u_h^*)_\Gamma\\
 & =-\frac{1}{2}\|y^*-y_h^*\|_{0,\Omega}^2+\frac{1}{2}\|y^*-y^h\|_{0,\Omega}^2+(\kappa_2 p^h_1+\kappa_1 p^h_2-p^*,u^*-u_h^*)_\Gamma\\
 & \leq -\frac{1}{2}\|y^*-y_h^*\|_{0,\Omega}^2+\frac{1}{2}\|y^*-y^h\|_{0,\Omega}^2+\frac{1}{2\alpha}\|p^*-(\kappa_2 p^h_1+\kappa_1 p^h_2)\|_{0,\Gamma}^2+\frac{\alpha}{2}\|u^*-u_h^*\|_{0,\Gamma}^2,
\end{align*}
which implies the  desired conclusion \eqref{0error-l2-state}.

Secondly,  let us   show \eqref{0error-l2-costate}.
 From \eqref{poincare-inequality},   \eqref{coercivity-discreteBilinear}, and \eqref{err-adjod0}, we have
\begin{align*}
|| p_h^*-p^h||^2_{0,\Omega}&\lesssim |||p_h^*-p^h|||^2\\
 &\lesssim a_h(p_h^*-p^h,p_h^*-p^h)=(y_h^*-y^*, p_h^*-p^h)_\Omega\\
 &\lesssim ||y_h^*-y^*||_{0,\Omega}|| p_h^*-p^h||_{0,\Omega},
\end{align*}
which, together with the triangle inequality, leads to the  estimate \eqref{0error-l2-costate}.

Thirdly, let us show \eqref{0error-meshnorm-state}.  From    \eqref{coercivity-discreteBilinear},  \eqref{err-statd0}, the trace inequality, and \eqref{poincare-inequality}, we obtain
\begin{align*}
|||y_h^*-y^h|||^2 &\lesssim a_h(y_h^*-y^h,y_h^*-y^h)= (u_h^*-u^*,\kappa_2 (y_{h,1}^*-y^h_1)+\kappa_1 (y_{h,2}^*-y^h_2))_\Gamma \\
&\leq \|u^*-u_h^*\|_{0,\Gamma} \sum_{i=1}^2 \|y_{h,i}^*-y_i^h\|_{0,\Gamma} \\
&\lesssim \|u^*-u_h^*\|_{0,\Gamma}|||y_h^*-y^h|||
\end{align*}
which, together with the triangle inequality, yields \eqref{0error-meshnorm-state}.

Finally, let us show \eqref{0error-meshnorm-costate}. From   \eqref{coercivity-discreteBilinear},  \eqref{err-adjod0}, and  \eqref{poincare-inequality},  we get
\begin{align*}
|||p_h^*-p^h|||^2&\lesssim a_h(p_h^*-p^h,p_h^*-p^h)=(y_h^*-y^*, p_h^*-p^h)_\Omega\\
  &\lesssim ||y_h^*-y^*||_{0,\Omega}|| p_h^*-p^h||_{0,\Omega}\\
  &\lesssim ||y_h^*-y^*||_{0,\Omega}||| p_h^*-p^h|||,
\end{align*}
which, together with the triangle inequality, indicates  \eqref{0error-meshnorm-costate}.
\end{proof}

Based on above theorem, with the help of \eqref{errorMeshnorm-cutH2regularity-state}-\eqref{errorL2norm-cutH2regularity-costate}, \eqref{error-meshnorm-state-lowRegularity}-\eqref{error-l2norm-costate-lowRegularity} and \eqref{errorInterface-costate-lowRegularity},  we can immediately obtain the following  main results of optimal error estimates.
\begin{theorem}
Let $(y^*,p^*,u^*)\in \left(H_0^1(\Omega)\cap H^s (\Omega_1 \cup \Omega_2)\right)\times \left(H_0^1(\Omega)\cap H^s (\Omega_1 \cup \Omega_2)\right)\times U_{ad}$ and $(y_h^*,p_h^*,u_h^*)\in V^h\times V^h\times U_{ad} (s=2,3/2)$ be  the solutions to the continuous problem \eqref{stateEqu2-continue}-\eqref{variational-inequality-continue} and the discrete problem \eqref{stateEqu2-discrete}-\eqref{variational-inequality-discrete}, respectively. Then we have, for $s=2$,
\begin{align}
\|u^*-u_h^*\|_{0,\Gamma}+\|y^*-y_h^*\|_{0,\Omega} +\|p^*-p_h^*\|_{0,\Omega} &\lesssim h^\frac{3}{2} (\|y^*\|_{2,\Omega_1\cup \Omega_2}+\|p^*\|_{2,\Omega_1\cup \Omega_2}),\label{error-l2-interContr}\\
|||y^*-y_h^*||| + |||p^*-p_h^*|||&\lesssim h (\|y^*\|_{2,\Omega_1\cup \Omega_2}+\|p^*\|_{2,\Omega_1\cup \Omega_2}), \label{error-meshnorm-interContr}
\end{align}
and for $s=3/2$,
\begin{align}
\|u^*-u_h^*\|_{0,\Gamma}+\|y^*-y_h^*\|_{0,\Omega} +\|p^*-p_h^*\|_{0,\Omega} &\lesssim h (\|y^*\|_{\frac32,\Omega_1\cup \Omega_2}+\|p^*\|_{\frac32,\Omega_1\cup \Omega_2}),\label{error-l2-lowRegularity-interContr}\\
|||y^*-y_h^*||| + |||p^*-p_h^*|||&\lesssim h^\frac{1}{2} (\|y^*\|_{\frac32,\Omega_1\cup \Omega_2}+\|p^*\|_{\frac32,\Omega_1\cup \Omega_2}). \label{error-meshnorm-lowRegularity-interContr}
\end{align}
\end{theorem}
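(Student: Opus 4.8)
The plan is to combine the abstract stability estimate of the previous theorem (the four inequalities \eqref{0error-l2-state}--\eqref{0error-meshnorm-costate}) with the CutFEM a priori error bounds already established, and then chase the dependencies carefully so that no circularity remains. First I would observe that the right-hand sides of \eqref{0error-l2-state} involve only $\|y^*-y^h\|_{0,\Omega}$ and $\|p^*-(\kappa_2 p^h_1+\kappa_1 p^h_2)\|_{0,\Gamma}$, i.e.\ purely the CutFEM errors for the auxiliary problems \eqref{stateEqu3-continue}--\eqref{costateEqu3-continue} with the \emph{exact} data $u^*$ and $y^*$; these are controlled by \eqref{errorL2norm-cutH2regularity-state}, \eqref{errorL2norm-cutH2regularity-costate} and \eqref{errorInterface-costate-lowRegularity} (for $s=2$) or by \eqref{error-l2norm-state-lowRegularity}, \eqref{error-l2norm-costate-lowRegularity} and \eqref{errorInterface-costate-lowRegularity} (for $s=3/2$). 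So from \eqref{0error-l2-state} I immediately get $\sqrt\alpha\|u^*-u_h^*\|_{0,\Gamma}+\|y^*-y_h^*\|_{0,\Omega}\lesssim h^{s-1/2}(\|y^*\|_{s,\Omega_1\cup\Omega_2}+\|p^*\|_{s,\Omega_1\cup\Omega_2})$, which gives the state and control parts of \eqref{error-l2-interContr} and \eqref{error-l2-lowRegularity-interContr}.

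Next I would feed this bound on $\|y^*-y_h^*\|_{0,\Omega}$ into \eqref{0error-l2-costate}, together with $\|p^*-p^h\|_{0,\Omega}\lesssim h^{s-1/2}\|p^*\|_{s,\Omega_1\cup\Omega_2}$ from \eqref{errorL2norm-cutH2regularity-costate}/\eqref{error-l2norm-costate-lowRegularity}, to obtain the co-state $L^2$ estimate and complete \eqref{error-l2-interContr}/\eqref{error-l2-lowRegularity-interContr}. For the mesh-dependent norm estimates, I would plug the already-derived bound on $\|u^*-u_h^*\|_{0,\Gamma}$ into \eqref{0error-meshnorm-state} together with $|||y^*-y^h|||\lesssim h^{s-1}\|y^*\|_{s,\Omega_1\cup\Omega_2}$ from \eqref{errorMeshnorm-cutH2regularity-state}/\eqref{error-meshnorm-state-lowRegularity}; note that $h^{s-1/2}\le h^{s-1}$, so the weaker term $h^{s-1}$ dominates and we get $|||y^*-y_h^*|||\lesssim h^{s-1}(\cdots)$. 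Finally, plugging the $L^2$ bound on $\|y^*-y_h^*\|_{0,\Omega}$ and $|||p^*-p^h|||\lesssim h^{s-1}\|p^*\|_{s,\Omega_1\cup\Omega_2}$ into \eqref{0error-meshnorm-costate} yields $|||p^*-p_h^*|||\lesssim h^{s-1}(\cdots)$, completing \eqref{error-meshnorm-interContr}/\eqref{error-meshnorm-lowRegularity-interContr}. Throughout, I would also invoke the regularity Lemmas of Section 2 to guarantee that under (R2) one has $y^*,p^*\in H^2(\Omega_1\cup\Omega_2)$ (with $u^*\in H^{1/2}(\Gamma)$), and under (R1) one has $y^*,p^*\in H^{3/2}(\Omega_1\cup\Omega_2)$, so that the hypotheses of the cited CutFEM lemmas are met.

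There is no serious obstacle here since all the heavy lifting was done in the preceding theorems; the only point requiring genuine care is the bookkeeping of the dependency graph: the control error depends on the CutFEM errors of the auxiliary problems (not on $y_h^*,p_h^*$ themselves), the state error feeds the co-state error, and both in turn feed the mesh-norm estimates, so one must apply the four inequalities in the right order to avoid a circular argument. A second minor point is reconciling exponents: \eqref{0error-l2-state} produces the sharp rate $h^{s-1/2}$ for $u^*$ and $y^*$ in $L^2$, whereas in the mesh-dependent norm the ambient CutFEM rate is only $h^{s-1}$, so one should simply absorb the better power into the worse one using $h\le 1$ (or $h\lesssim 1$) without claiming a spurious improvement. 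Assembling these observations gives the four displayed estimates.
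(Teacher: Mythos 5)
Your proposal is correct and is essentially the paper's own argument: the paper proves this theorem precisely by combining the preceding stability estimates \eqref{0error-l2-state}--\eqref{0error-meshnorm-costate} with the CutFEM bounds \eqref{errorMeshnorm-cutH2regularity-state}--\eqref{errorL2norm-cutH2regularity-costate}, \eqref{error-meshnorm-state-lowRegularity}--\eqref{error-l2norm-costate-lowRegularity} and the interface estimate \eqref{errorInterface-costate-lowRegularity}, in exactly the order you describe. Your bookkeeping of the rates ($h^{s-1/2}$ in $L^2$, absorbed into $h^{s-1}$ for the mesh-dependent norm using $h\lesssim 1$) matches the stated conclusions.
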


\section{Numerical results}

We shall provide several 2D numerical examples to verify the performance of the proposed finite element method. Because the variational inequality (\ref{variational-inequality-discrete}) is just equivalent to a projection, we shall simply use the fixed-point iteration algorithm to compute the discrete optimality problem \eqref{stateEqu2-discrete}-\eqref{variational-inequality-discrete}.
\paragraph{Algorithm}
\begin{enumerate}
\item Initialize $u^i_h=u^0$;
\item Compute $y^i_h\in V^h$ by $a_h(y_h^i,w_h)=(f,w_h)+(g+u_h^i,\kappa_2 w_{h,1}+\kappa_1 w_{h,2})_\Gamma,\forall w_h\in V^h$; \label{computeyhi}
\item Compute $p_h^i\in V^h$ by $a_h(w_h,p_h^i)=(y_h^i-y_d,w_h),\forall w_h\in V^h$;
\item Set $u_h^{i+1}=\max\{u_a,\min\{-\frac{\kappa_2 p_{h,1}^i+\kappa_1 p_{h,2}^i}{\alpha},u_b\}\}$;
\item if $|u_h^{i+1}-u_h^i|<\text{Tol}$ or $i+1>\text{MaxIte}$, then output $u_h^*=u_h^{i+1}$, else $i=i+1$, and go back to Step \ref{computeyhi}.
\end{enumerate}
Here $u^0$ is an initial value, Tol is the tolerance, and MaxIte is the maximal iteration number. This algorithm is convergent when the regularity parameter $\alpha$ is large enough (cf. \cite{Hinze09opt}).

In all numerical examples, we choose $\Omega \subseteq \mathbb{R}^2$ to be a square, and use $N\times N$  uniform  meshes with  $2N^2$ triangular elements. 

\begin{example}
Segment interface.

Take $\Omega:=[0,1]\times [0,1]$ (cf. Figure \ref{ex1interface-figure}) with a segment interface $$\Gamma:=\{(x_1,x_2):x_2=kx_1+b\}\cap \Omega,$$ where $k=-\sqrt{3}/3, b=(6+\sqrt{6}-2\sqrt{3})/6$, and set $$\Omega_1:=\{(x_1,x_2):x_2>kx_1+b\}\cap \Omega,\quad  \Omega_2:=\{(x_1,x_2):x_2<kx_1+b\}\cap \Omega.$$
Choose $\ U_{ad}=\{v\in L^2(\Gamma):\sin(\pi (x_1-1/2))\leq v\leq 1, \text{ a.e. on } \Gamma\}$,
\[a(x_1,x_2)=\left\{
\begin{array}{ll}
1, & \text{if }(x_1,x_2)\in \Omega_1, \\
100, & \text{if }(x_1,x_2)\in \Omega_2.
\end{array}
\right.\]
Let $y_d, f, g$ be such that  the optimal triple $(y^*,p^*,u^*)$  of optimal control problem \eqref{stateEqu2-continue}-\eqref{variational-inequality-continue}  is defined as follows
\begin{align*}
 & y^*(x_1,x_2)=\left\{
\begin{array}{lr}
(x_2-kx_1-b)\cos(x_1 x_2), & \text{if }(x_1,x_2)\in \Omega_1, \\
(x_2-kx_1-b)\cos(x_1 x_2)/100, & \text{if }(x_1,x_2)\in \Omega_2,
\end{array}
\right. \\
 & p^*(x_1,x_2)=\left\{
\begin{array}{lr}
100(x_2-kx_1-b)x_1(x_1-1)x_2(x_2-1)\sin(x_1 x_2), & \text{if }(x_1,x_2)\in \Omega_1, \\
(x_2-kx_1-b)x_1(x_1-1)x_2(x_2-1)\sin(x_1 x_2), & \text{if }(x_1,x_2)\in \Omega_2,
\end{array}
\right. \\
 &u^*(x_1,x_2)=\max\{\sin(\pi(x_1-1/2)),0\} \text{ for } (x_1,x_2)\in \Gamma.
\end{align*}
\end{example}

\begin{figure}[htbp]
\centering
\includegraphics[width=4cm]{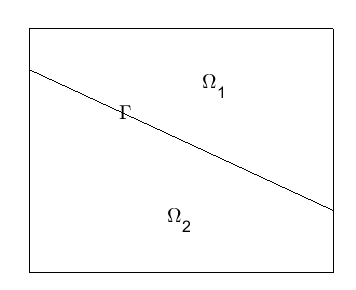}
\caption{Segment interface for Example 5.1}
\label{ex1interface-figure}
\end{figure}

We compute the discrete schemes  \eqref{stateEqu2-discrete}-\eqref{variational-inequality-discrete} with  the regularity parameter $\alpha =1, 0.0001 $ and the stabilization parameter $\widetilde{C} =50, 1000$. We note that, from (3.10), $\widetilde{C}$ is required to be sufficiently large to keep  the coerciveness of $a_h (\cdot,\cdot)$. Tables \ref{errorL2-reg1stab50-ex1}-\ref{error-reg1stab1000-ex1} show the history of convergence for the optimal discrete triple $(y_h^*,p_h^*,u_h^*)$, where for simplicity we set $|\cdot|_1:=|\cdot|_{1,\Omega_1\cup\Omega_2}, \ \|\cdot\|_0:=\|\cdot\|_{0,\Omega}$. For comparison, we also list in Tables \ref{errorL2-reg1stab50-ex1}-\ref{errorH1-reg1stab50-ex1} the results obtained by using the conforming linear finite element method ($P_1$-FEM).

From the numerical results, we can see that for all cases the CutFEM  yields first order rates of convergence for $|y^*-y_h^*|_1$ and $|p^*-p_h^*|_1$, which are consistent with the theoretical results \eqref{error-l2-interContr}-\eqref{error-meshnorm-interContr}, and yields second  rates of convergence for $\|y^*-y_h^*\|_0$, $\|u^*-u_h^*\|_{0,\Gamma}$ and $\|p^*-p_h^*\|_0$, which are better than the theoretical order $3/2$. We can also see that,
without using interface-fitted meshes and adding  into the approximation additional basis functions characterizing the singularity   around the interface, the $P_1$-FEM is not able to attain  optimal convergence.

\begin{table}[htbp]
	\centering
	\begin{tabular}{|c|c|c|c|c|c|c|c|}
		\hline
		&N & $\|y^*-y_h^*\|_{0 }$ &order& $\|u^*-u_h^*\|_{0,\Gamma}$&order& $\|p^*-p_h^*\|_{0}$ &order  \\
		\hline
	&16&1.79e-2& &1.47e-3& &2.94e-2& \\
	&32& 9.21e-3  & 1.0& 6.98e-4  &1.1 &1.31e-2&1.2\\
	$P_1-FEM$ &64& 4.68e-3  & 1.0& 3.21e-4 &1.1&6.22e-3&1.1\\
	&128& 2.36e-3 & 1.0&1.57e-4 &1.0&3.03e-3 &  1.0\\
	&256& 1.18e-3  & 1.0&7.68e-5 &1.0&1.50e-3&1.0 \\
		\hline
	& 16 & 5.30e-4  & &5.51e-4   & &1.52e-2 &\\
	&32 &1.32e-4  &2.0 &1.20e-4     &2.2& 2.95e-3 &2.4\\
	CutFEM&64 & 3.31e-5  &2.0 &2.47e-5    &2.3& 6.81e-4&2.1 \\
	&128&8.28e-6  &2.0 &5.62e-6    & 2.1& 1.49e-4&2.2 \\
	&256&2.06e-6  &2.0 &1.37e-6    & 2.0& 3.41e-5& 2.1\\
		\hline
	\end{tabular}
	\caption{History of convergence  in $L^2$-norm (Example 5.1): $\alpha=1, \widetilde{C}=50$}\label{errorL2-reg1stab50-ex1}
\end{table}

\begin{table}[htbp]
	\centering
	\begin{tabular}{|c|c|c|c|c|c|}
		\hline
		&N & $|y^*-y_h^*|_{1 }$ &order&$|p^*-p_h^*|_{1 }$ &order  \\
		\hline
	&16&	2.74e-1& &4.99e-1& \\
	&32&	1.90e-1 &0.5& 2.91e-1  & 0.8\\
	$P_1-FEM$ &64&	1.32e-1 &0.5 &  1.79e-1 & 0.7\\
	&128&9.28e-2 &0.5& 1.16e-1  &0.6 \\
	&256&6.51e-2 &0.5&  7.83e-2 & 0.6\\
		\hline
	& 16 &3.12e-2 &&   3.81e-1    & \\
	&32 &1.56e-2  & 1.0& 1.84e-1   &1.1\\
	CutFEM&64 &7.81e-3  & 1.0 & 9.00e-2  &1.0 \\
	&128&3.90e-3  & 1.0&4.44e-2 & 1.0 \\
	&256&1.95e-3  &1.0 &2.21e-2 & 1.0 \\
		\hline
	\end{tabular}
	\caption{History of convergence  in $H^1$-seminorm  (Example 5.1): $\alpha=1, \widetilde{C}=50$}\label{errorH1-reg1stab50-ex1}
\end{table}


\begin{table}
	\centering
	\begin{tabular}{|c|c|c|c|c|c|c|c|c|c|c|}
		\hline
		N&$|y-y_h|_1$&order &$\|y-y_h\|_0$&order&$\|u-u_h\|_0$&order&$|p-p_h|_1$&order&$\|p-p_h\|_0$&order \\
		\hline
16 & 3.12e-2& &  5.32e-4  & &  1.78e-7 &&  5.54e-5 & & 4.14e-6& \\
32 &1.56e-2 & 1.0& 1.33e-4 &2.0   & 3.61e-8 & 2.3 & 2.14e-5 & 1.4& 1.16e-6&1.8 \\
64 &7.81e-3&1.0 & 3.36e-5 &2.0 & 1.02e-8& 1.8& 9.44e-6 &1.2  & 3.13e-7&1.8 \\
128&3.90e-3 & 1.0 & 8.49e-6 &2.0 & 2.70e-9 &1.9& 4.50e-6 & 1.1 & 8.39e-8&1.9 \\
256 & 1.95e-3 &1.0 &  2.12e-6 & 2.0 & 6.98e-10  &2.0& 2.22e-6  &1.0& 2.16e-8&2.0 \\
		\hline
		\end{tabular}	\caption{History of convergence for CutFEM (Example 5.1): $\alpha =0.0001,\widetilde{C}=50$}
	\label{error-reg00001stab50-ex1}
\end{table}


\begin{table}
	\centering
	\begin{tabular}{|c|c|c|c|c|c|c|c|c|c|c|}
		\hline
		N &$|y-y_h|_1$&order&$\|y-y_h\|_0$&order&$\|u-u_h\|_0$&order&$|p-p_h|_1$&order&$\|p-p_h\|_0$&order\\
		\hline
	 16&3.15e-2 & &5.66e-4  && 7.79e-4   &&  4.24e-1 &  & 2.14e-2 &\\
	32&1.58e-2 &1.0& 1.57e-4 &1.9 & 2.28e-4  &1.8 & 2.02e-1  & 1.1  & 5.01e-3&2.1 \\
	64 &7.86e-3 &1.0&  3.81e-5  & 2.0& 6.12e-5  & 1.9 &  1.00e-1& 1.0  & 1.47e-3& 1.8\\
	128&3.91e-3  &1.0& 9.05e-6  &2.1& 1.69e-5 & 1.9 & 4.80e-2  &1.1 &  3.21e-4 &2.2\\
	256&1.96e-3 &1.0& 2.14e-6  &2.1& 3.74e-6  & 2.2& 2.29e-2  &1.1 & 6.53e-5 &2.3\\
			\hline
	\end{tabular}
	\caption{History of convergence for CutFEM (Example 5.1): $\alpha =1,\widetilde{C}=1000$}
	\label{error-reg1stab1000-ex1}
\end{table}

\begin{example}
Polygonal line interface.

Take $\Omega:=[0,2]\times [0,2]$ (cf. Figure \ref{ex2interface-figure}) with a  polygonal line interface
$$\Gamma:=\{(x_1,x_2):\varphi(x_1,x_2)=0,b\leq x_1 \leq 2-b,b\leq x_2 \leq 2-b\},$$ where $\varphi(x_1,x_2)=(x_2-(-x_1+1+b))(x_2-(x_1-1+b))(x_2-(-x_1-b+3))(x_2-(x_1+1-b)),b=\sqrt{3}/4$. And set
$$\Omega_1:=\{(x_1,x_2):\varphi(x_1,x_2)>0,b\leq x_1 \leq 2-b,b\leq x_2 \leq 2-b\}, \quad \Omega_2:=\Omega \setminus \{\Omega_1 \cup \Gamma\}.$$
Take $\alpha=1, U_{ad}:=\{v\in L^2(\Gamma):\sin (2\pi x_1)\leq v\leq 1, \text{ a.e. on } \Gamma\}$,
\[a(x_1,x_2)=\left\{
\begin{array}{ll}
1, & \text{if }(x_1,x_2)\in \Omega_1, \\
10, & \text{if }(x_1,x_2)\in \Omega_2.
\end{array}
\right.\]
Let $y_d, f, g$ be such that  the optimal triple $(y^*,p^*,u^*)$ of optimal control problem \eqref{stateEqu2-continue}-\eqref{variational-inequality-continue} is defined as follows
\begin{align*}
 & y^*(x_1,x_2)=\left\{
\begin{array}{lr}
10\varphi(x_1,x_2)e^{(x_1-1)(x_2-1))}, & \text{if }(x_1,x_2)\in \Omega_1, \\
\varphi(x_1,x_2)e^{(x_1-1)(x_2-1))}, & \text{if }(x_1,x_2)\in \Omega_2,
\end{array}
\right. \\
 & p^*(x_1,x_2)=\left\{
\begin{array}{lr}
10\varphi(x_1,x_2)x_1(x_1-2)x_2(x_2-2), & \text{if }(x_1,x_2)\in \Omega_1, \\
\varphi(x_1,x_2)x_1(x_1-2)x_2(x_2-2), & \text{if }(x_1,x_2)\in \Omega_2,
\end{array}
\right. \\
 & u^*(x_1,x_2)=\max\{\sin(2\pi x_1),0\}, \text{  for } (x_1,x_2)\in \Gamma.
\end{align*}
Notice that $y^*,p^* \notin  H^{2}(\Omega_1 \cup \Omega_2)$, but $y^*,p^* \in  H^{3/2}(\Omega_1 \cup \Omega_2)$.
\end{example}

\begin{figure}[htbp]
\centering
\includegraphics[width=4cm]{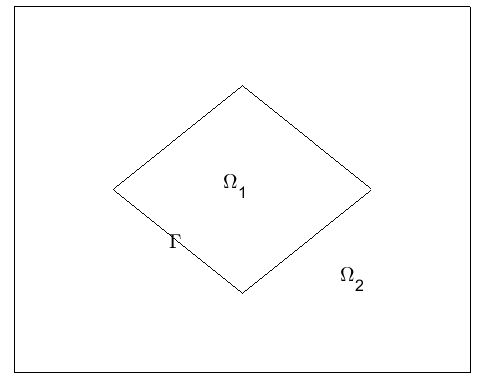}
\caption{Polygonal line interface for Example 5.2}
\label{ex2interface-figure}
\end{figure}

We compute the discrete schemes  \eqref{stateEqu2-discrete}-\eqref{variational-inequality-discrete} with the stabilization parameter $\widetilde{C}$ of \eqref{stablization-constant} as $\widetilde{C}=50$. Table \ref{cutError-ex2} shows the history of convergence for the optimal discrete triple $(y_h^*,p_h^*,u_h^*)$.

From the numerical results, we can see that the  CutFEM  shows higher order rates of convergence than the theoretical results \eqref{error-l2-lowRegularity-interContr}-\eqref{error-meshnorm-lowRegularity-interContr} (with $s=3/2$) for all the error terms. We note that our numerical results are  also better than those in   \cite[Table 1 and Table 2]{Wachsmuth16opt}, which are roughly $ (\delta+0.5)$-order  for $\|u^*-u_h^*\|_{0,\Gamma}$, $ (2\delta)$-order for $\|y^*-y_h^*\|_{0}$, and $\delta$-order for $|y^*-y_h^*|_{1}$ with $\delta=0.7$.

\begin{table}[htbp]
	\centering
\footnotesize
	\begin{tabular}{|c|c|c|c|c|c|c|c|c|c|c|}
		\hline
		N& $|y^*-y_h^*|_1$ &order & $\|y^*-y_h^*\|_0$ &order& $\|u^*-u_h^*\|_{0,\Gamma}$&order&$|p^*-p_h^*|_1$ &order& $\|p^*-p_h^*\|_0$ &order\\
		\hline
		16&	1.04& &3.49e-2& &7.25e-3& & 6.23e-1& &3.43e-2&\\
		32&	4.95e-1 &1.1 &6.46e-3  &2.4 &8.58e-4  &3.0& 2.67e-1  &1.2&5.79e-3&2.5\\
		64&	2.50e-1 &1.0&1.81e-3  &1.8& 4.13e-4 &1.1&  1.35e-1 &1.0&1.58e-3&1.9\\
		128&1.26e-2 &1.0&5.19e-4 &1.8&1.70e-4 &1.3& 6.75e-2  &1.0&4.21e-4 & 1.9\\
		256&6.25e-2 & 1.0&1.31e-4  &2.0&5.15e-5 &1.7&3.33e-2 &1.0&8.38e-5&2.3\\
		\hline
	\end{tabular}
	\caption{History of convergence for CutFEM (Example 5.2)}
	\label{cutError-ex2}
\end{table}

\begin{example}
Five-star interface.

Take $\Omega:=[-1,1]\times [-1,1]$ (cf. Figure \ref{ex3interface-figure}) with a 5-star  interface $$\Gamma:=\{(x_1,x_2):\varphi(r,\theta)=0,0\leq \theta \leq 2\pi\},$$
where $\varphi(x_1,x_2)=r-\frac{\sqrt{3}}{4}-0.1\sin(5\theta+\frac{\pi}{2})$, with $x_1=r \cos\theta, x_2=r \sin\theta$. And set $$\Omega_1:=\{(x_1,x_2):\varphi(x_1,x_2)<0\}\cap \Omega, \quad \Omega_2:=\Omega \setminus \{\Omega_1 \cup \Gamma\}.$$
Take $\alpha=1, U_{ad}:=\{v\in L^2(\Gamma):0\leq v\leq 1\}$,
\[a(x_1,x_2)=\left\{
\begin{array}{ll}
1, & \text{if }(x_1,x_2)\in \Omega_1, \\
10, & \text{if }(x_1,x_2)\in \Omega_2,
\end{array}
\right.\]
\[g=0,~y_d=\left\{
\begin{array}{ll}
10, & \text{if }(x_1,x_2)\in \Omega_1, \\
1, & \text{if }(x_1,x_2)\in \Omega_2,
\end{array}
\right.~f=1.
\]
Since the interface $\Gamma$ is of complicated shape,  it is difficult to give the explicit expressions of  the optimal triple $(y^*,p^*,u^*)$.
\end{example}

\begin{figure}[htbp]
\centering
\includegraphics[width=4cm]{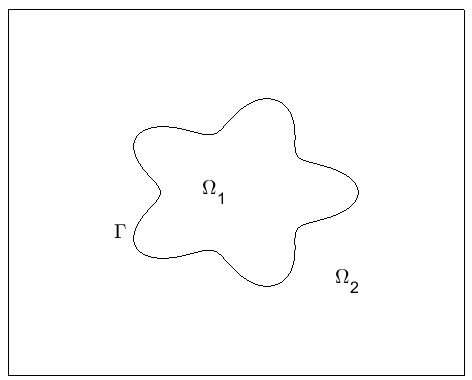}
\caption{Five-star interface for Example 5.3}
 \label{ex3interface-figure}
\end{figure}

We compute the discrete schemes \eqref{stateEqu2-discrete}-\eqref{variational-inequality-discrete} with the stabilization parameter $\widetilde{C}=50$ and $1000$. Let $y_{h,50}^*$ and  $y_{h,1000}^*$ denote the CutFEM approximations    of state $y$ with $\widetilde{C}=50$ and $\widetilde{C}=1000$, respectively. Also let $p_{h,50}^*$ and and $p_{h,1000}^*$ denote the CutFEM approximations of co-state $p$ with $\widetilde{C}=50$ and  $\widetilde{C}=1000$, respectively.

In Figures \ref{CutfemEx3-50-figures}-\ref{CutfemEx3-1000-figures}, we give the optimal discrete states   $y_{h,50}^*$, $y_{h,1000}^*$, and the discrete co-states   $p_{h,50}^*$, $p_{h,1000}^*$ on $64\times 64$ mesh. 
  Figure \ref{Difference-CutfemEx3-figures} demonstrates   the difference $y_{h,1000}^*-y_{h,50}^*$ and $p_{h,1000}^*-p_{h,50}^*$  on $64\times 64$ mesh. These figures show that  the numerical interfaces are distinct  for both the state and co-state and accord with the interface of the equations.  Once again we find that, a large $\widetilde{C}$ may affect the numerical results slightly. 

\begin{figure}[htbp]
 \centering
 \begin{minipage}{10cm}
 \includegraphics[width=4cm]{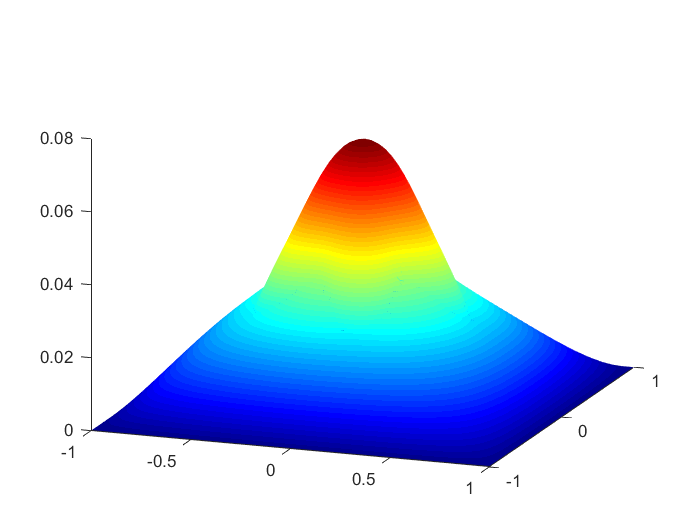}
 \includegraphics[width=4cm]{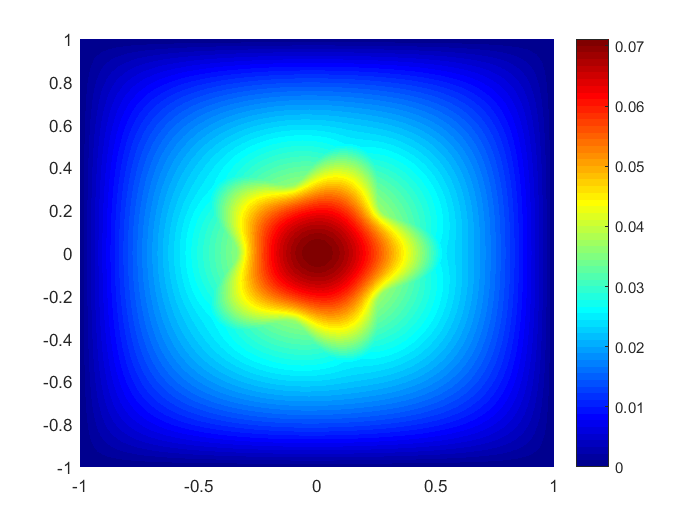}\\
  \end{minipage}
\qquad\qquad
   \begin{minipage}{10cm}
 \includegraphics[width=4cm]{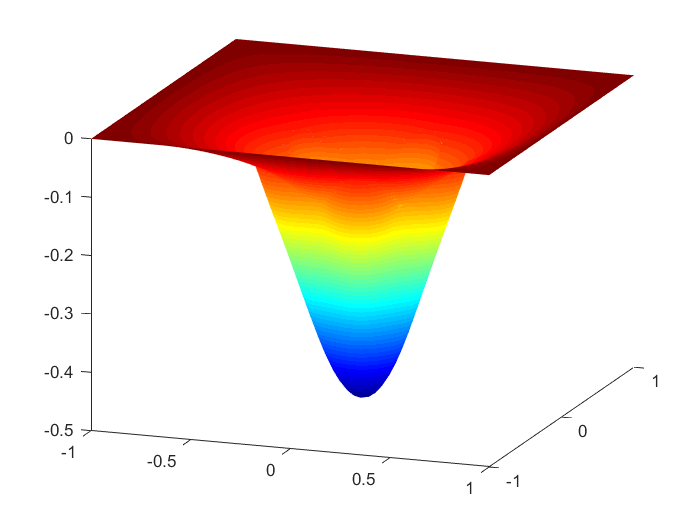}
 \includegraphics[width=4cm]{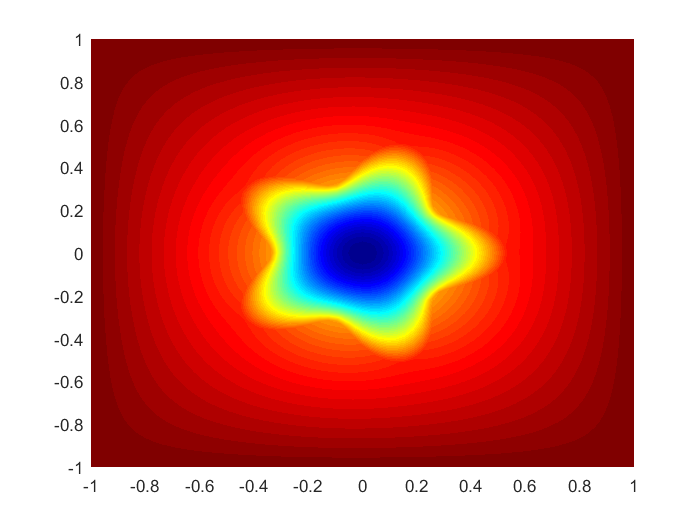}
  \end{minipage}
\caption{The CutFEM approximations (Example 5.3): $\widetilde{C}=50$. The upper    two figures  and the lower two figures show the graphs of  $y_{h,50}^*$ and $p_{h,50}^*$, respectively.} 
  \label{CutfemEx3-50-figures}
\end{figure}

\begin{figure}[htbp]
 \centering
 \begin{minipage}{10cm}
 \includegraphics[width=4cm]{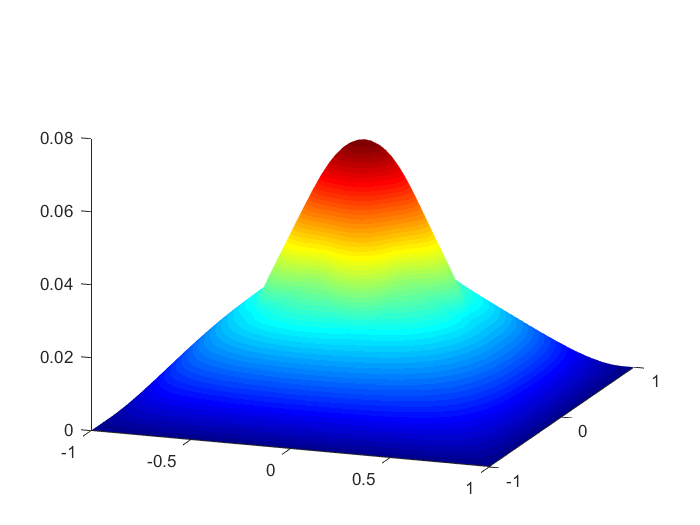}
 \includegraphics[width=4cm]{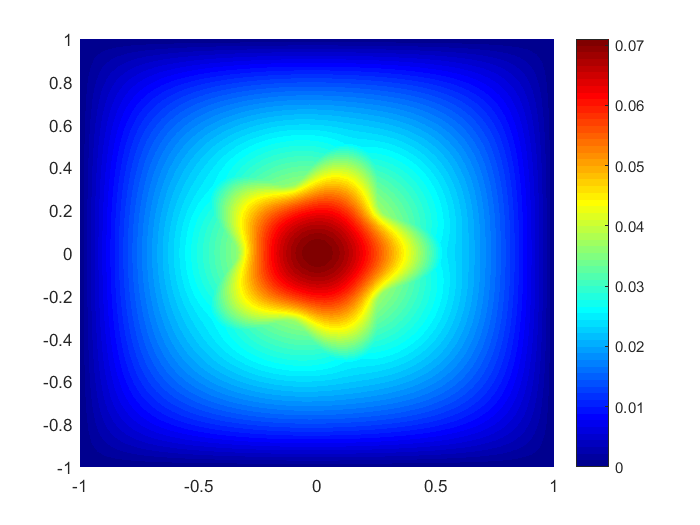}
  \end{minipage}
\qquad\qquad
   \begin{minipage}{10cm}
 \includegraphics[width=4cm]{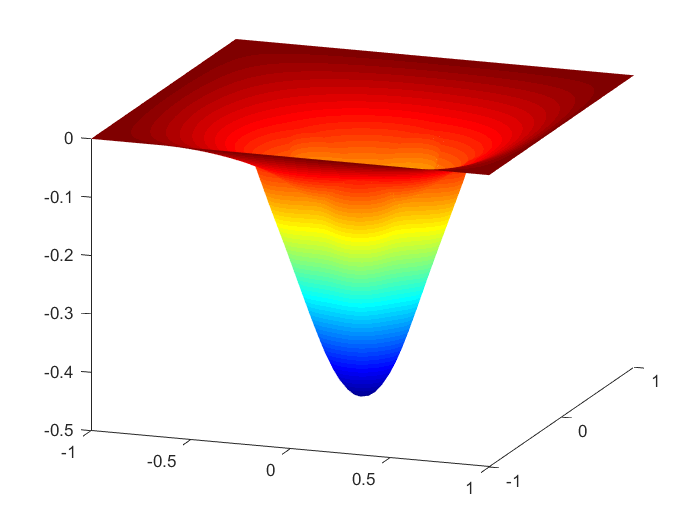}
 \includegraphics[width=4cm]{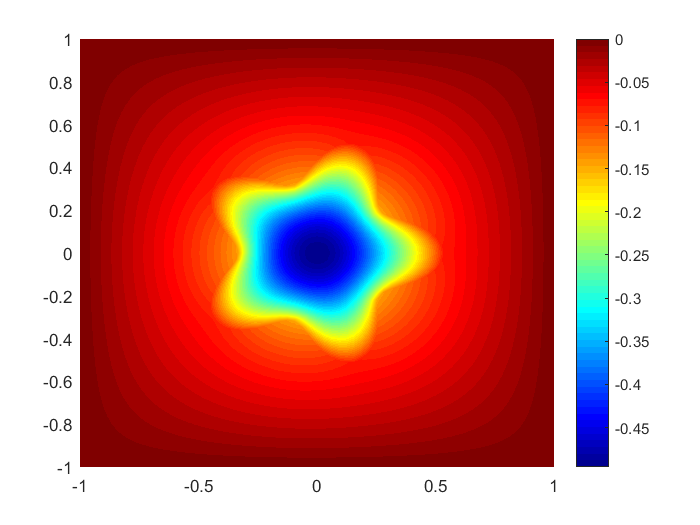}
  \end{minipage}
\caption{The CutFEM approximations (Example 5.3): $\widetilde{C}=1000$. The upper    two figures  and the lower two figures show the graphs of  $y_{h,1000}^*$ and $p_{h,1000}^*$, respectively.} 
  \label{CutfemEx3-1000-figures}
\end{figure}

\begin{figure}[htbp]
 \centering
 \begin{minipage}{12cm}
 \includegraphics[width=6cm]{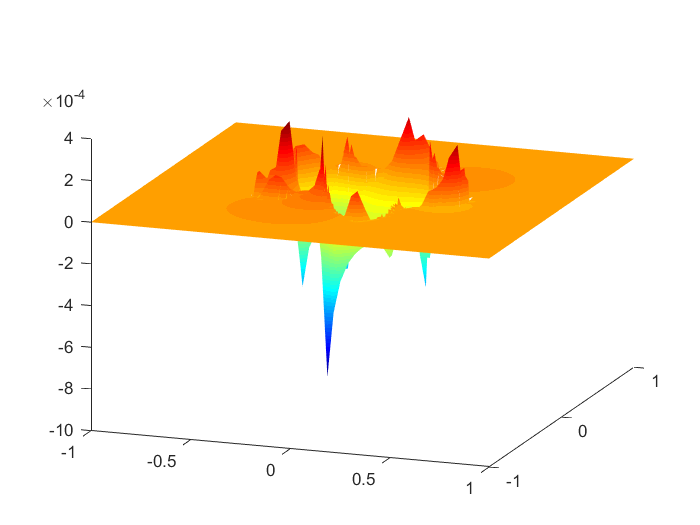}
 \includegraphics[width=6cm]{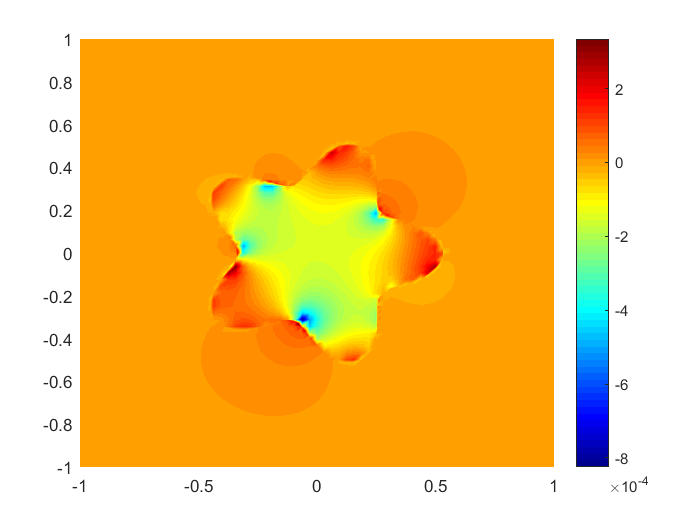}
  \end{minipage}
\qquad\qquad
   \begin{minipage}{12cm}
 \includegraphics[width=6cm]{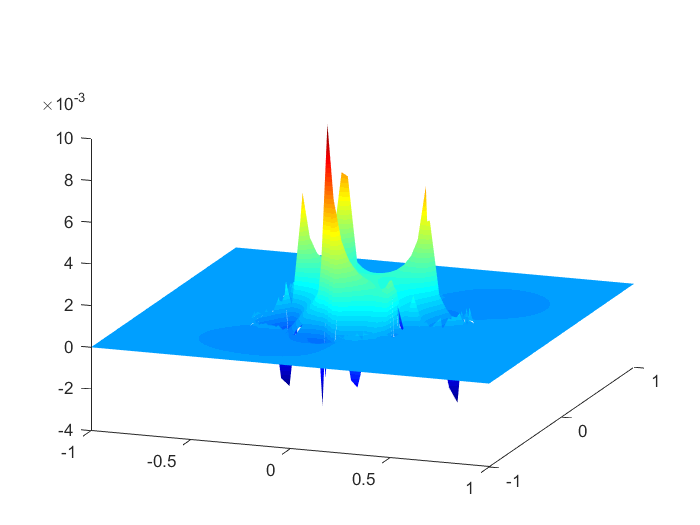}
 \includegraphics[width=6cm]{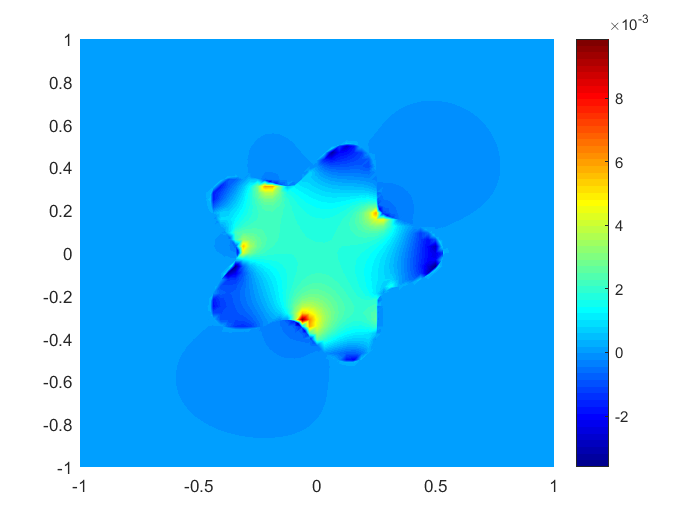}
  \end{minipage}
\caption{The difference between the CutFEM approximations (Example 5.3):  The upper    two figures  and the lower two figures show the graphs of  $y_{h,1000}^*-y_{h,50}^*$ and $p_{h,1000}^*-p_{h,50}^*$, respectively.} 
  \label{Difference-CutfemEx3-figures}
\end{figure}

\bibliography{InterfaceControlBib}

\begin{thebibliography}{10}

\bibitem{Babuska70fin}
I.~Babu\u{s}ka.
\newblock The finite element method for elliptic equations with discontinuous
  coefficients.
\newblock {\em Computing}, 5:207--213, 1970.

\bibitem{Barretts87Fit}
J.W. Barrett and C.M. Elliott.
\newblock Fitted and unfitted finite-element methods for elliptic equations
  with smooth interfaces.
\newblock {\em IMA J. Numer. Anal.}, 7(3):283--300, 1987.

\bibitem{Beckers09nit}
R.~Becker, E.~Burman, and P.~Hansbo.
\newblock A {N}itsche extended finite element method for incompressible
  elasticity with discontinuous modulus of elasticity.
\newblock {\em Comput. Methods Appl. Mech. Engrg.}, 198(41-44):3352--3360,
  2009.

\bibitem{Beckers00ada}
R.~Becker, H.~Kapp, and R.~Rannacher.
\newblock Adaptive finite element methods for optimal control of partial
  differential equations: Basic concept.
\newblock {\em SIAM J. Control Optim.}, 39(1):113--132, 2000.

\bibitem{Belyschkos09rev}
T.~Belytschko, R.~Gracie, and G.~Ventura.
\newblock A review of extended/generalized finite element methods for material
  modeling.
\newblock {\em Modelling Simul. Mater. Sci. Eng.}, 17(4):1--24, 2009.

\bibitem{Benedixs09pos}
O.~Benedix and B.~Vexler.
\newblock A posteriori error estimation and adaptivity for elliptic optimal
  control problems with state constraints.
\newblock {\em Comput. Optim. Appl.}, 44(1):3--25, 2009.

\bibitem{Brambles96fin}
J.H. Bramble and J.T. King.
\newblock A finite element method for interface problems in domains with smooth
  boundaries and interfaces.
\newblock {\em Adv. Comput. Math.}, 6(1):109--138, 1996.

\bibitem{Bren08}
S.C. Brenner and L.R. Scott.
\newblock {\em The mathematical theory of finite element methods}.
\newblock Springer-Verlag, Berlin, 3rd edition, 2008.

\bibitem{Brezis11fun}
H.~Brezis.
\newblock {\em Functional analysis, {S}obolev space and partial differential
  equations}.
\newblock Springer, 2011.

\bibitem{Burman15cut}
E.~Burman, S.~Claus, P.~Hansbo, M.G. Larson, and A.~Massing.
\newblock Cutfem: discretizing geometry and partial differential equations.
\newblock {\em Int. J. Numer. Meth. Engng}, 104(7):472--501, 2015.

\bibitem{Cai17dis}
Z.~Cai, C.~He, and S.~Zhang.
\newblock Discontinuous finite element methods for interface problems: Robust a
  priori and a posteriori error estimates.
\newblock {\em SIAM J. Numer. Anal.}, 55(1):400--418, 2017.

\bibitem{Camps06qua}
B.~Camp, T.~Lin, Y.~Lin, and W.~Sun.
\newblock Quadratic immersed finite element spaces and their approximation
  capabilities.
\newblock {\em Adv. Comput. Math.}, 24(1-4):81--112, 2006.

\bibitem{Cenanovics16cut}
M.~Cenanovia, P.~Hansbo, and M.G. Larson.
\newblock Cut finite element modeling of linear membranes.
\newblock {\em Comput. Methods Appl. Mech. Engrg.}, 310:98--111, 2016.

\bibitem{Chens10err}
Y.~Chen, Y.~Huang, W.~Liu, and N.~Yan.
\newblock Error estimates and superconvergence of mixed finite elment methods
  for convex optimal control problems.
\newblock {\em J. Sci. Comput.}, 42(3):382--403, 2010.

\bibitem{Chen98}
Z.~Chen and J.~Zou.
\newblock Finite element methods and their convergence for elliptic and
  parabolic interface problems.
\newblock {\em Numer. Math.}, 79(2):175--202, 1998.

\bibitem{Deka18wea}
B.~Deka.
\newblock A weak galerkin finite element method for elliptic interface problems
  with polynomial reduction.
\newblock {\em Numer. Math. Theor. Meth. Appl.}, 11:655--672, 2018.

\bibitem{Fedkiw06imm}
R.~Fedkiw.
\newblock The immersed interface method. numerical solutions of pdes involving
  interfaces and irregular domains.
\newblock {\em Math. Comput.}, 76(259):1691, 2006.

\bibitem{Gongs16ada}
W.~Gong and N.~Yan.
\newblock Adaptive finite element method for elliptic optimal control problems:
  convergence and optimality.
\newblock {\em Numer. Math.}, 135(4):1121--1170, 2017.

\bibitem{Hans16a3d}
D.~Han, P.~Wang, X.~He, T.~Lin, and J.~Wang.
\newblock A 3{D} immersed finite element method with non-homogeneous interface
  flux jump for applications in particle-in-cell simulations of plasma-lunar
  surface interactions.
\newblock {\em J. Comput. Phys.}, 321:965--980, 2016.

\bibitem{Hans02}
A.~Hansbo and P.~Hansbo.
\newblock An unfitted finite element method, based on nitsche's method, for
  elliptic interface problems.
\newblock {\em Comput. Methods Appl. Mech. Engrg.}, 191(47-48):5537--5552,
  2002.

\bibitem{Hansbo14cut}
P.~Hansbo, M.G. Larson, and S.~Zahedi.
\newblock A cut finite element method for a stokes interface problem.
\newblock {\em Appl. Numer. Math.}, 85(C):90--114, 2014.

\bibitem{Hes11imm}
X.~He, T.~Lin, and Y.~Lin.
\newblock Immersed finite element methods for elliptic interface problems with
  non-homogeneous jump conditions.
\newblock {\em Int. J. Numer. Anal. Model.}, 8(2):284--301, 2011.

\bibitem{Hes12con}
X.~He, T.~Lin, and Y.~Lin.
\newblock The convergence of the bilinear and linear immersed finite element
  solutions to interface problems.
\newblock {\em Numer. Methods Partial Differential Equations}, 28(1):312--330,
  2012.

\bibitem{Hintermuller10goa}
M.~Hinterm\"{u}ller and R.H.W. Hoppe.
\newblock Goal-oriented adaptivity in pointwise state constrained optimal
  control of partial differential equations.
\newblock {\em SIAM J. Control Optim.}, 48(8):5468--5487, 2010.

\bibitem{Hinz05var}
M.~Hinze.
\newblock A variational discretization concept in control constrained
  optimization: The linear-quadratic case.
\newblock {\em Comput. Optim. Applic.}, 30:45--61, 2005.

\bibitem{Hinz09}
M.~Hinze and U.~Matthes.
\newblock A note on variational discretization of elliptic neumann boundary
  control.
\newblock {\em Control Cybernet.}, 38(3):577--591, 2009.

\bibitem{Hinze09opt}
M.~Hinze, R.~Pinnau, M.~Ulbrich, and S.~Ulbrich.
\newblock {\em Optimization with PDE Constraints}.
\newblock Springer, 2009.

\bibitem{Huangs02mor}
J.~Huang and J.~Zou.
\newblock A mortar element method for elliptic problems with discontinuous
  coefficients.
\newblock {\em IMA J. Numer. Anal.}, 22(4):549--576, 2002.

\bibitem{Jeri81}
D.S. Jerison and C.E. Kenig.
\newblock The {N}eumann problem on {L}ipschitz domains.
\newblock {\em Bull. Am. Math. Soc.}, 4(2):203--207, 1981.

\bibitem{Jis18par}
H.~Ji, Q.~Zhang, Q.~Wang, and Y.~Xie.
\newblock A partially penalised immersed finite element method for elliptic
  interface problems with non-homogeneous jump conditions.
\newblock {\em East. Asia. J. Appl. Math.}, 8:1--23, 2018.

\bibitem{Kohls14Convergence}
K.~Kohls, K.G. Siebert, and A.~R\"{o}sch.
\newblock Convergence of adaptive finite elements for optimal control problems
  with control constraints.
\newblock In G.~Leugering et~al., editor, {\em Trends in PDE Constrained
  Optimization}, volume 165 of {\em International Series of Numerical
  Mathematics}, pages 403--419. Birkh\"{a}user, Cham, 2014.

\bibitem{Lehrenfelds17opt}
C.~Lehrenfeld and A.~Reusken.
\newblock Optimal preconditioners for {Nitsche-XFEM} discretizations of
  interface problems.
\newblock {\em Numer. Math.}, 135(2):313--332, 2017.

\bibitem{Lis10opt}
J.~Li, M.J. Markus, B.~Wohlmuth, and J.~Zou.
\newblock Optimal a priori estimates for higher order finite elements for
  elliptic interface problems.
\newblock {\em Appl. Numer. Math.}, 60(1-2):19--37, 2010.

\bibitem{Lis02ada}
R.~Li, W.~Liu, H.~Ma, and T.~Tang.
\newblock Adaptive finite element approximation for distributed elliptic
  optimal control problems.
\newblock {\em SIAM J. Control Optim.}, 41(5):1321--1349, 2002.

\bibitem{Li06imm}
Z.~Li and K.~Ito.
\newblock {\em The immersed interface method: numerical solutions of PDEs
  involving interfaces and irregular domains}.
\newblock Frontiers Appl. Math. 33. SIAM, Philadelphia, 2006.

\bibitem{Li03new}
Z.~Li, T.~Lin, and X.~Wu.
\newblock New cartesian grid methods for interface problems using the finite
  element formulation.
\newblock {\em Numer. Math.}, 96(1):61--98, 2003.

\bibitem{Lins07err}
T.~Lin, Y.~Lin, and W.~Sun.
\newblock Error estimation of a class of quadratic immersed finite element
  methods for elliptic interface problems.
\newblock {\em Discrete Contin. Dyn. Syst. Ser. B}, 7(4):807--823, 2007.

\bibitem{Lins15par}
T.~Lin, Y.~Lin, and X.~Zhang.
\newblock Partially penalized immersed finite element methods for elliptic
  interface problems.
\newblock {\em SIAM J. Numer. Anal.}, 53(2):1121--1144, 2015.

\bibitem{Lion71}
J.L. Lions.
\newblock {\em Optimal control of systems governed by partial differential
  equations}.
\newblock Springer-Verlag, Berlin, 1971.

\bibitem{Lius17sec}
C.~Liu and C.~Hu.
\newblock A second order ghost fluid method for an interface problem of the
  poisson equation.
\newblock {\em Commun. Comput. Phys.}, 22(4):965--996, 2017.

\bibitem{Lius09new}
W.~Liu, W.~Gong, and N.~Yan.
\newblock A new finite element approximation of a state-constained optimal
  control problem.
\newblock {\em J. Comput. Math.}, 27(1):97--114, 2009.

\bibitem{Melenk95gen}
J.M. Melenk.
\newblock {\em On generalized finite element methods}.
\newblock PhD thesis, University of Maryland at College Park, 1995.

\bibitem{Melenks96par}
J.M. Melenk and I.~Babu\u{s}ka.
\newblock The partition of unity finite element method: basic theory and
  applications.
\newblock {\em Comput. Methods Appl. Mech. Engrg.}, 139(1-4):289--314, 1996.

\bibitem{Moes99fin}
N.~Mo\"{e}s, J.~Dolbow, and T.~Belytschko.
\newblock A finite element method for crack growth without remeshing.
\newblock {\em Int. J. Numer. Meth. Engng.}, 46(1):131--150, 1999.

\bibitem{Nitsche71ube}
J.~Nitsche.
\newblock \"{U}ber ein variationsprinzip zur l\"{o}sung von dirichlet-problemen
  bei verwendungvon von teilr\"{a}umen, die keinen randbedingungen unterworfen
  sind.
\newblock {\em Abh. Math. Univ. Hamburg}, 36(1):9--15, 1971.

\bibitem{Plums03opt}
M.~Plum and C.~Wieners.
\newblock Optimal a priori estimates for interface problems.
\newblock {\em Numer. Math.}, 95(4):735--759, 2003.

\bibitem{Roschs17rel}
A.~R\"{o}sch, K.G. Siebert, and S.~Steinig.
\newblock Reliable a posteriori error estimation for state-constrained optimal
  control.
\newblock {\em Comput. Optim. Appl.}, 68(1):121--162, 2017.

\bibitem{Schneiders16pos}
R.~Schneider and G.~Wachsmuth.
\newblock A posteriori error estimation for control-constrained,
  linear-quadratic optimal control problems.
\newblock {\em SIAM J. Numer. Anal.}, 54(2):1169--1192, 2016.

\bibitem{Schott17sta}
B.~Schott.
\newblock {\em Stabilized cut finite element methods for complex interface
  coupled flow problems}.
\newblock PhD thesis, Technische Universit\"{a}t M\"{u}nchen, 2017.

\bibitem{Scot90}
L.R. Scott and S.~Zhang.
\newblock Finite element interpolation of nonsmooth functions satisfying
  boundary conditions.
\newblock {\em Math. Comp.}, 54(190):483--493, 1990.

\bibitem{Strouboulis00des}
T.~Strouboulis, I.~Babu\u{s}ka, and K.~Copps.
\newblock The design and analysis of the generalized finite element method.
\newblock {\em Comput. Methods Appl. Mech. Engrg.}, 181(1):43--69, 2000.

\bibitem{Troltzsch10opt}
F.~Tr\"{o}ltzsch.
\newblock {\em Optimal control of partial differential equations: Theory,method
  and applications}, volume 112 of {\em Graduate studies in mathematics}.
\newblock American mathematical society, Providence, {R}hode {I}sland, 2010.

\bibitem{Wachsmuth16opt}
D.~Wachsmuth and J.E. Wurst.
\newblock Optimal control of interface problems with hp-finite elements.
\newblock {\em Nume. Funct. Anal. Optim.}, 37(3):363--390, 2016.

\bibitem{Wengs16sta}
Z.~Weng, J.Z. Yang, and X.~Lu.
\newblock A stabilized finite element method for the convection dominated
  diffusion optimal control problem.
\newblock {\em Appl. Anal.}, 95(12):2807--2823, 2016.

\bibitem{Xu82est}
J.~Xu.
\newblock Estimate of the convergence rate of the finite element solutions to
  elliptic equation of second order with discontinuous coefficients.
\newblock {\em Natural Science Journal of Xiangtan University}, 1:84--88, 1982.
\newblock (in Chinese).

\bibitem{Xus16opt}
J.~Xu and S.~Zhang.
\newblock Optimal finite element methods for interface problems.
\newblock In T.~Dickopf et~al., editor, {\em Domain Decomposition Methods in
  Science and Engineering XXII}, volume 104 of {\em Lecture Notes in
  Computational Science and Engineering}, pages 77--91. Springer, Cham, 2016.

\bibitem{Yangs17rob}
F.W. Yang, C.~Venkataraman, V.~Styles, and A.~Madzvamuse.
\newblock A robust and efficient adaptive multigrid solver for the optimal
  control of phase field formulations of geometric evolution laws.
\newblock {\em Commun. Comput. Phys.}, 21(1):65--92, 2017.

\bibitem{Zhang15imm}
Q.~Zhang, K.~Ito, Z.~Li, and Z.~Zhang.
\newblock Immersed finite elements for optimal control problems of elliptic
  pdes with interfaces.
\newblock {\em J. Comput. Phys.}, 298(C):305--319, 2015.

\bibitem{Zhus18spl}
H.~Zhu, K.~Liang, G.~He, and J.~Ying.
\newblock A splitting collocation method for elliptic interface problems.
\newblock {\em Numer. Math. Theor. Meth. Appl.}, 11:491--505, 2018.

\end{thebibliography}

\end{document}